\definecolor{myblue}{rgb}{.25, .25, .9}
\definecolor{myred}{rgb}{.3, .3, .3}
\definecolor{myred2}{rgb}{.3, .3, .3}
\definecolor{mygreen}{rgb}{.25, .6, .5}
\numberwithin{equation}{section}
\newtheoremstyle{mystyle}
  {}
  {}
  {\itshape}
  {}
  {\bfseries}
  {.}
  { }
  {}
\theoremstyle{mystyle}
\newtheorem{theorem}{Theorem}[section]
\newtheorem{definition}[theorem]{Definition}
\newtheorem{lemma}[theorem]{Lemma}
\newtheorem{corollary}[theorem]{Corollary}
\newtheorem{example}[theorem]{Example}
\newtheorem{remark}[theorem]{Remark}
\newcommand\xqed[1]{%
  \leavevmode\unskip\penalty9999 \hbox{}\nobreak\hfill
  \quad\hbox{#1}}
\newcommand\exampleEnd{\xqed{$\circ$}}
\DeclareMathAlphabet{\mathbbold}{U}{bbold}{m}{n}
\titleformat{\section}[runin]
{\bfseries}{\llap{\thesection\hskip 9pt}}{0pt}{}
\titleformat{\subsection}[runin]
{\bfseries}{\llap{\thesubsection\hskip 9pt}}{0pt}{}
\titleformat{\subsubsection}[runin]
{\bfseries}{\llap{\thesubsubsection\hskip 9pt}}{0pt}{}
\titleformat{\paragraph}[runin]
{\bfseries}{\llap{\theparagraph\hskip 9pt}}{0pt}{}
\newbox\dottedarrow@box
\newcommand*\dottedarrow
\newcommand*\dottedarrow@t[1][1.5em]
\newcommand*\dottedarrow@m[1][]
\relax\detokenize{#1}\relax
\newcounter{savecntr}
\title{{\textbf{\Large On topological properties of closed attractors}}}
\author{Wouter Jongeneel\setcounter{savecntr}{\value{footnote}}\thanks{The author is with the KTH Royal Institute of Technology (DCS) and Digital Futures, Stockholm. This work was supported by Digital Futures through the project ``\textit{Programmability of Cells}'' and the author is grateful to Matthew Kvalheim and Esra Öncel for feedback. Contact: \texttt{wouterjo@kth.se}, website: \url{wjongeneel.nl}.}
}
\date{\small{\today}}
\begin{document}
\maketitle
\thispagestyle{empty}

\begin{abstract}
The notion of an attractor has various definitions in the theory of dynamical systems. Under compactness assumptions, several of those definitions coincide and the theory is rather complete. However, without compactness, the picture becomes blurry. To improve our understanding, we characterize in this work when a closed, not necessarily compact, asymptotically stable attractor on a locally compact metric space is homotopy equivalent to its domain of attraction. This enables a further structural study of the corresponding feedback stabilization problem.   
\end{abstract}

{\footnotesize{
\noindent\textbf{\textit{Keywords}}|attractors, dynamical systems, stability\\
\textbf{\textit{AMS Subject Classification (2020)}}| 37B25, 55P10, 93D20
}}


\section{Introduction}
The (topological) dynamical systems theory of (asymptotically stable) \textit{compact} attractors is rather mature, \textit{e.g.}, see~\cite{ref:bhatiahajek2006local,ref:bhatia1970stability,ref:conley1978isolated,ref:akin1993general}. When an attractor is merely \textit{closed}, however, our theory is significantly less complete and unified. In this work, we aim to contribute to improving our understanding here by answering the following question:\\\\
\textbf{{Question:}} ``\textit{When is a closed attractor $A$, on a metric space $(X,d)$, homotopy equivalent to its basin of attraction $B(A)$?}''
\\\\
We will make this question more precise in the remainder of the introduction, but first we elaborate on its relevance.

The study of topological relations between attractors and their domain of attraction is a classical one and of importance in control theory. A reason being, due to insufficiently accurate models and computational obstructions, we are typically unable to find an explicit expression for the domain of attraction, especially when feedback is introduced. However, applications demand an understanding of this set, \textit{e.g.}, what are all the perturbations a cruising aeroplane, oscillating genetic regulatory network or walking robot can recover from? Or differently put, given a control system and some desirable domain of attraction $B'$, can we find continuous feedback such that the resulting basin of attraction $B(A)$ equals $B'$? If such a feedback does not exist, how to overcome this? The topological viewpoint provides for coarse, but general answers to these questions. A basic example to have in mind is that a pendulum cannot be globally stabilized upright using continuous feedback, essentially due to the circle $\mathbb{S}^1$ not being contractible. However, introducing a discontinuity in the feedback, that is, a single jump or switch, does allow for global stabilization. Topologically speaking, one needs to cut the circle. 

We cannot do justice to the rich history of this line of work, but we highlight the seminal contributions \cite{ref:wilson1967structure}, \cite{ref:bhatia1970stability}, \cite{ref:gunther1993every}, \cite[Thm.~21]{ref:sontag2013mathematical}, ~\cite{BhatBernstein} and adjacent \cite{ref:brockett1983asymptotic,ref:krasnosel1984geometrical,ref:zabczyk1989,ref:Coron1990}. We highlight \cite{ref:mansouri2007local,ref:mansouri2010topological,ref:moulay2010topological,ref:bernuau2013retraction,ref:kvalheim2022necessary,
ref:yao2022topological2,ref:yao2023domain2,ref:kvalheim2022obstructions,ref:kvalheim2023relationships,ref:jongeneelECC24} as more recent contributions and we point the reader to \cite{ref:jongeneel2023topological} for a recent overview.

The early focus on \textit{compact} attractors can be understood as their study encapsulates equilibrium points and limit cycles \cite{ref:auslanderbhatiasiebert1967asymptotic}.
Nevertheless, the focus on closed, but non-compact attractors in particular, is also of great theoretical and practical interest.

From a practical perspective, we may provide the following examples. 

\begin{enumerate}[(i)]
\item \textit{The kernel of output maps}. Suppose we have a continuous nonlinear control system of the canonical form $\dot{x}=f(x,u)$, $y=h(x)$, with $0\in \mathrm{dom}(h)$, and we would like to globally zero the output $y$ using an appropriate choice of static state-feedback $x\mapsto \mu(x)$ that enters as the input $u$. That means that we aim to render $h^{-1}(0)=\{x\,|\, h(x)=0\}$ a global attractor. Since $h$ is continuous, $h^{-1}(0)$ is closed, but not necessarily compact.
\item \textit{Synchronization and estimation}. In the context of, for instance, observer design on a state space $X$, we generally aim to stabilize diagonal sets of the form $\Delta_X=\{ (x,x)\,|\, x\in X\}$. If $X$ is not compact, so is $\Delta_X$.  
\item \textit{Time-varying systems}. Consider the time-varying ODE $\dot{x}=f(x,t)$ on a space $X$. Suppose we want to understand stability of the set $A\subseteq X$ under $f$. In that case, we could study stability of $A\times \mathbb{R}_{\geq 0}$ under the autonomous ODE $\dot{x}=f(x,s)$, $\dot{s}=1$. 
\item \textit{Overparametrized learning}. 
In the context of machine learning, the majority of problems are formulated as optimization problems of the form $\inf_{\theta \in \mathbb{R}^d}L(\theta)$, where $L:\mathbb{R}^d\to \mathbb{R}$ is some differentiable loss function. For instance, $L(\theta)=\frac{1}{n}\sum^n_{i=1} \ell(h(x_i,\theta),y_i)^2$, where $(x_i,y_i)$ are pairs of data points, $\ell$ is a (local) loss function and $h$ is a predictor parametrized by $\theta$, \textit{e.g.}, $x_i$ might be an image and $y_i$ a binary classification variable that indicates if the image contains a dog or not. The map $h$ is usually a neural network and the optimal $\theta$ is typically sought through some approximation of $\dot{\theta}=-\nabla L(\theta)$. When the network is \textit{overparametrized}, that is, $d \gg n$, the set of optimizers can be shown to be closed and unbounded under appropriate assumptions \cite{ref:nguyen2019connected}. This means we study a closed attractor. 
\end{enumerate}  

From a theoretical perspective, closed attractors are challenging as compactness is a highly convenient structure exploited in a lot of proofs, \textit{e.g.}, consult~\cite{ref:bhatiahajek2006local,ref:bhatia1970stability,ref:conley1978isolated,ref:akin1993general,ref:guillemin2010differential}, more concretely, see for instance \cite[Thm.~1]{ref:kvalheim2022necessary} for a result where several of these constructions come together.

To elaborate on our question, we will assume that $(X,d)$ is a locally compact metric space and that $A$ is uniformly asymptotically stable under some continuous dynamical system. Then, we will largely address our question using Borsuk's \textit{retraction theory}~\cite{ref:hu1965,ref:borsuk1967theory,ref:dydak2012ideas}. Moreover, we are particularly inspired  by Auslander's work towards unifying stability through filters \cite{ref:auslander1977filter}, that is, for compact attractors there is no difference between metrical- and topological definitions of stability, but for closed attractors this difference is non-trivial and neatly captured by neighbourhood filters. In general, the line of work by Bhatia and coworkers \cite{ref:auslanderbhatiasiebert1967asymptotic,ref:bhatia1967asymptotic,ref:bhatiahajek2006local,ref:bhatia1970stability} provides us with the right foundations. We remark that their work builds upon the seminal monographs by Nemytskii and Stepanov~\cite{ref:nemytskii} and Zubov~\cite{ref:zubov1964methods}.

Other noteworthy developments are H{\'a}jek's para-stability \cite{ref:hajek1972ordinary} and Hurley's work on exploiting locally compact $\sigma$-compact  spaces in the context of non-compact attractors under maps \cite{ref:hurley2001weak}. We highlight that if a space $X$ is locally compact and $\sigma$-compact, then there is a countable set of compact sets $K_1,K_2,\dots,$ such that $X=\cup_{i\in \mathbb{N}_{>0}}K_i$, with $K_i \subseteq \mathrm{int}\,K_{i+1}$ \cite[p. 94]{ref:bourbakigeneraltopology}. This structure is exploited in the majority of work concerned with closed attractors, \textit{e.g.}, below we assume that $(X,d)$ is not only locally compact, but also separable, this to appeal to \cite[Lem. V.4.26]{ref:bhatia1970stability}. Indeed, for metric spaces this is equivalent to assuming local compactness and $\sigma$-compactness, \textit{e.g.}, see \cite[Ch. V.1]{ref:nemytskii}. 

Most works appeal to a metric structure, we highlight one exception. There, the price to pay is that assumptions on $A$ are arguably stronger. Specifically, building upon the likes of Zubov and Ura, Bhatia and H{\'a}jek provide a comprehensive theory for closed attractors $A$, with compact topological boundaries $\partial A$, under semi-dynamical systems on locally compact Hausdorff spaces \cite{ref:bhatiahajek2006local}. The compact boundary allows for a theory reminiscent of compact attractors, that is, one may focus on a compact subset of $(B(A)\setminus A) \cup \partial A$, thereby one can appeal to Urysohn's lemma (as compact Hausdorff spaces are normal) and construct a continuous Lyapunov function \cite[Thm. 10.6]{ref:bhatiahajek2006local}. A picture to have in mind is shown in Figure~\ref{fig:closedexamples} $(i)$. 

Although our focus is on topological dynamical systems, we highlight that on $\mathbb{R}^n$, and under different regularity assumptions (\textit{e.g.}, local Lipschitzness of inclusions), a fairly complete converse Lyapunov theory for closed attractors is available, even for control Lyapunov functions \cite{ref:kellett2004weak}, see also \cite{ref:lin1996smoothV2,ref:albertini1999continuous}. 

We also highlight that our focus is on \textit{homotopy} equivalence. This, to strike a balance between generality and distinctiveness. 
Nonetheless, there is an interesting line of work on \textit{shape} equivalence, which would be more general but less distinctive,~\textit{e.g.}, see~\cite{ref:hastings1979higher,ref:garay1991strong,ref:gunther1993every,ref:kapitanski2000shape,ref:giraldo2001some,ref:giraldo2009singular} and \cite[Prop.~1]{ref:kvalheim2022necessary}.

In Section~\ref{sec:prelim} we provide the background material on topology and dynamical systems, plus we develop a few new tools. In Section~\ref{sec:correct:Wilson} we introduce our running example(s) and in Section~\ref{sec:main} we detail and prove our main results, that is, we characterize in Theorem~\ref{thm:main:cofib} when $A$ is a strong deformation retract of $B(A)$. We illustrate how answering these type of questions are of use in the context of global feedback stabilization in Section~\ref{sec:feedback} and we close the work in Section~\ref{sec:conclusion}

    \begin{figure}
        \centering
        \includegraphics[scale=1]{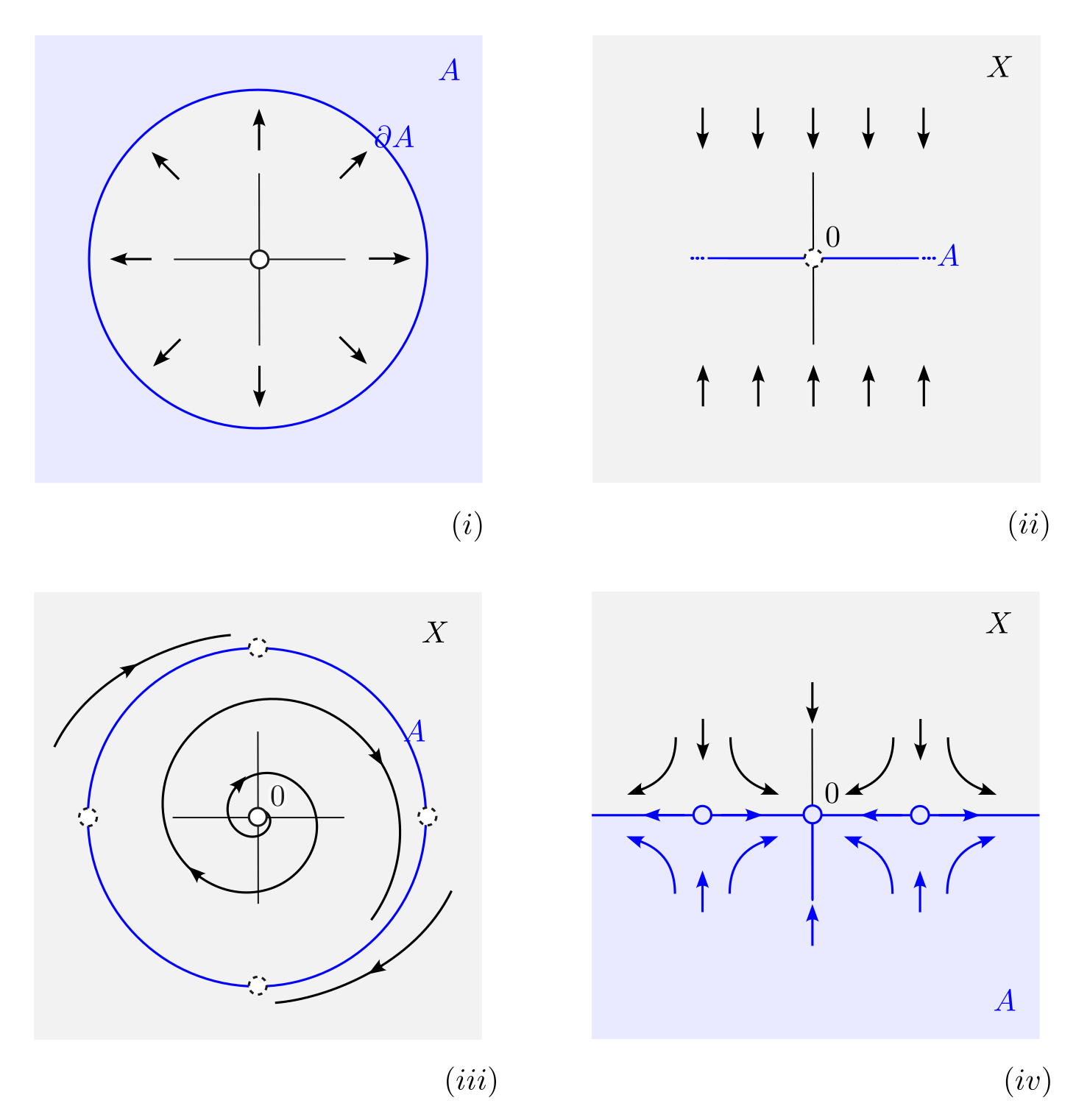}
        \caption{Examples of a closed, but non-compact, attractors, with in $(i)$ $A$ being of the form $\mathbb{R}^2\setminus \{x:\|x\|_2<1\}$ such that $\partial A =\mathbb{S}^1$, whereas in $(ii)$ the underlying space $X$ is of the form $\mathbb{R}^2\setminus \{0\}$ such that $A$ as drawn is closed. In $(iii)$ $X$ is $\mathbb{R}^2$ with $\{ (1,0),(0,1),(-1,0),(0,-1)\}$ removed, this is slightly more involved version of the example in Section~\ref{sec:correct:Wilson}. At last, in $(iv)$ one sees how closed attractors might emerge by grouping several invariant sets.}
        \label{fig:closedexamples}
    \end{figure}

\section{Preliminaries and notation}
\label{sec:prelim}
In this section we introduce all the topology and dynamical systems theory to define and prove our main result. 

\subsection{General topology}
We will work with a \textbf{\textit{metric space}} $(X,d)$, \textit{e.g.}, see \cite[Ch. 3-7]{ref:munkrestopology}. In particular, this means for us that we will work with the topology $\tau$ induced by $d$ and thus with open metric balls of the form $B_{r}(x;d) := \{x'\in X \, | \, d(x,x')<r\}$. Now, given a closed subset $A\subseteq X$, consider for some $\varepsilon>0$ the sets
\begin{align*}
&N_{\varepsilon}(A;d) := \{x\in X\,|\, d(x,A)< \varepsilon \}\quad \text{and}\\
&D_{\varepsilon}(A;d) := \{x\in X\,|\, d(x,A)\leq \varepsilon \}, 
\end{align*}
where $d(\cdot,A):X\to \mathbb{R}_{\geq 0}$ is defined through
\begin{equation*}
x\mapsto d(x,A) := \inf_{x'\in  A}d(x,x'). 
\end{equation*}
It is convenient to recall that $x\mapsto d(x,A)$ is $1$-Lipschitz. Also, if $d$ is irrelevant or clear from the context, we drop it in the notation, \textit{e.g.}, we write $N_{\varepsilon}(A)$. 

\subsection{Retraction theory}
\label{sec:retraction}
Two continuous maps $f,g:X\to Y$ are \textit{homotopic}, denoted $f\simeq_h g$, when there is a continuous map $H:X\times [0,1]\to Y$ such that $H(\cdot,0)=f$ and $H(\cdot,1)=g$.
Two topological spaces $X$ and $Y$ are said to be \textbf{\textit{homotopy equivalent}} when there are continuous maps $f:X\to Y$ and $g:Y\to X$ such that $f\circ g\simeq_h \mathrm{id}_Y$ and $g\circ f\simeq_h \mathrm{id}_X$. We will overload notation and also write $X\simeq_h Y$. In the context of dynamical systems, we aim to understand when some attractor $A$ and its basin of attraction $B(A)$ are homotopy equivalent. In this case, one is naturally drawn to \textit{retractions}.     

To that end, we recall that a set $A\subseteq X$ is a \textbf{\textit{retract}} of $X$ when there is a continuous map $r:X\to A$ such that $r\circ \iota_A = \mathrm{id}_A$, for $\iota_A$ the inclusion map $\iota_A:A\hookrightarrow X$. Note, if $r:X\to A$ is a retract, then $A$ is closed if $X$ is a Hausdorff space. Another convenient fact is that if we find a $U$ such that $A\subseteq U\subseteq X$, then $r|_U : U \to A$ is also a retract. The set $A$ is said be a \textbf{\textit{deformation retract}} of $X$ when $A$ is a retract and additionally $\iota_A\circ r \simeq_h \mathrm{id}_X$, implying that $X$ is homotopy equivalent to $A$. When, additionally, the homotopy is \textit{stationary relative to $A$}, we speak of a \textbf{\textit{strong deformation retract}}. We emphasize that this terminology is not completely agreed upon \textit{cf}.~\cite[Ch.~0]{Hatcher} and~\cite[Sec.~1.11]{ref:hu1965}. 

As an important intermediate notion, a set $A\subseteq X$ is said to be a \textbf{\textit{weak deformation retract}} of $X$ when every open neighbourhood $U$ of $A$ contains a strong deformation retract $V\supseteq A$ of $X$. We emphasize that these definitions rely on the topology on $X$.  

Suppose for the moment that $X$ is a locally compact metric space. Then, in \cite[Thm.~5]{ref:moulay2010topological} it was shown that compact (asymptotically stable) attractors are weak deformation retracts of $B(A)$. To exploit this result, we need another notion of retraction.    

A set $A\subseteq X$ is a \textbf{\textit{neighbourhood retract}} of $X$ when there is an open neighbourhood $U\subseteq X$ of $A$ such that $A$ is a retract of $U$. This definition extends naturally to (strong) deformation retracts. We emphasize again that this is a topological definition, with some variation throughout the literature.  

Now, if $B(A)$ weakly deformation retracts onto $A$, while $A$ is a neighbourhood deformation retract of $B(A)$, we have by composition that $A\simeq_h B(A)$. This is the philosophy as set forth in, for instance, \cite{ref:kapitanski2000shape,ref:moulay2010topological}. Then, by leveraging this approach and by appealing to cofibrations, we answered the research question of this article, but for compact attractors on locally compact Hausdorff spaces, in \cite{ref:jongeneelECC24}.

Prior to \cite{ref:jongeneelECC24}, only sufficient conditions were known, \textit{e.g.}, for $A$ being a smooth submanifold. 
In this work, we complete this line of work for closed attractors on metric spaces. The crux is to generalize these notions of retraction to \textit{neighbourhood filters}.

\subsection{Retractions and neighbourhood filters} 
\label{sec:retract:and:filters}
Let $X$ be a topological space, a \textbf{\textit{filter}} $\mathscr{F}$ on a $X$ is collection of subsets of $X$ such that (i) $\varnothing \notin \mathscr{F}$; (ii) if $U\in \mathscr{F}$ and $V\supseteq U$ then $V\in \mathscr{F}$; and (iii) if $U,V\in \mathscr{F}$ then $U\cap V\in \mathscr{F}$ \cite[Ch. I.6]{ref:bourbakigeneraltopology}. Note that (i) and (ii) together imply that $X\in \mathscr{F}$. Filters are convenient to study convergence, in general~\cite[Ch. I.7]{ref:bourbakigeneraltopology}, and the remaining subsection should be understood in that light. Then, for two filters $\mathscr{F}_1$ and $\mathscr{F}_2$ on $X$ we write $\mathscr{F}_1\leq \mathscr{F}_2$ when $\mathscr{F}_2$ is finer than $\mathscr{F}_1$, that is, when $U\in \mathscr{F}_1\implies \exists V\in \mathscr{F}_2:V\subseteq U$. 
 
Now, let $(X,d)$ be a metric space with $A\subseteq X$ some closed subset, then two important filters for us are as follows.  
\begin{enumerate}[(i)]
\item The \textit{topological} neighbourhood filter of $A$, denoted $\mathscr{F}_{\tau}$, and defined through $\mathscr{F}_{\tau}:=\{U\subseteq X\,|\, U$ is an open neighbourhood of $A\}$. 
\item The \textit{metric} neighbourhood filter of $A$, denoted $\mathscr{F}_d$, and defined through $\mathscr{F}_{d}:=\{U\subseteq X\,|\, U\supseteq N_{\varepsilon}(A;d)$ for some $\varepsilon>0 \}$.  
\end{enumerate}
The reason being, these two filters capture the different notions of convergence as seen in the study of attractors, that is, purely topological, or with respect to some metric.    

It readily follows that if $U\in \mathscr{F}_d$, then $U\in \mathscr{F}_{\tau}$ and so $\mathscr{F}_d\leq \mathscr{F}_{\tau}$, however, the converse fails to be true in general (\textit{e.g.}, see Example~\ref{ex:S1closed}). In general, we speak of a neighbourhood filter $\mathscr{F}$, with respect to some understood set $A\subseteq X$, when all elements of $\mathscr{F}$ are neighbourhoods of $A$. Indeed, $\mathscr{F}_{\tau}$ is the finest neighbourhood filter.    

Using filters, we generalize several notions of retraction. 

\begin{definition}[$\mathscr{F}$-weak deformation retract]
\label{def:F:weak:def:retract}
Let $\mathscr{F}$ be a neighbourhood filter on $X$ with respect to $A\subseteq X$.
Then, $A$ is a $\mathscr{F}$-weak deformation retract of $X$ when every $U\in \mathscr{F}$ contains a strong deformation retract of $X$. 
\end{definition}  
 
\begin{definition}[$\mathscr{F}$-neighbourhood (deformation) retract]
\label{def:F:neighbourhood:retract}
Let $\mathscr{F}$ be a neighbourhood filter on $X$ with respect to $A\subseteq X$.
Then, $A$ is a $\mathscr{F}$-neighbourhood (deformation) retract of $X$ when there is a $W\in \mathscr{F}$ that (deformation) retracts onto $A$. 
\end{definition} 

One may generalize Definition~\ref{def:F:neighbourhood:retract} to strong neighbourhood deformation retracts in the obvious way. One may also observe a certain type of duality, as captured by the following two lemmas. 
\begin{lemma}[Coarser and finer filter retracts]
Let $\mathscr{F}$ and $\mathscr{G}$ be neighbourhood filters on $X$, with respect to $A\subseteq X$, such that $\mathscr{F}\leq \mathscr{G}$.
\begin{enumerate}[(i)]
\item If $A$ is a $\mathscr{G}$-weak deformation retract of $X$, then $A$ is a $\mathscr{F}$-weak deformation retract of $X$.
\item If $A$ is a $\mathscr{F}$-neighbourhood deformation retract of $X$, then $A$ is a $\mathscr{G}$-neighbourhood deformation retract of $X$.
\end{enumerate}
\end{lemma}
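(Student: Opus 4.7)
The plan is to observe first that the filter-refinement relation $\mathscr{F}\leq \mathscr{G}$, as defined in the excerpt, is in fact equivalent to the set inclusion $\mathscr{F}\subseteq \mathscr{G}$. Indeed, given any $U\in \mathscr{F}$, the refinement hypothesis yields some $V\in \mathscr{G}$ with $V\subseteq U$, and then closure of the filter $\mathscr{G}$ under supersets (axiom (ii) in the definition of a filter) forces $U\in \mathscr{G}$. The converse is trivial by taking $V=U$. With this identification in hand, both parts reduce to short arguments in unpacking Definitions \ref{def:F:weak:def:retract} and \ref{def:F:neighbourhood:retract}.

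For part (i), I would pick an arbitrary $U\in \mathscr{F}$ and use $\mathscr{F}\leq \mathscr{G}$ to extract some $V\in \mathscr{G}$ with $V\subseteq U$. The hypothesis that $A$ is a $\mathscr{G}$-weak deformation retract then supplies a strong deformation retract $W\supseteq A$ of $X$ with $W\subseteq V$. Transitivity of containment gives $A\subseteq W\subseteq U$, so the same $W$ witnesses that $U$ contains a strong deformation retract of $X$. Since $U\in \mathscr{F}$ was arbitrary, this establishes that $A$ is a $\mathscr{F}$-weak deformation retract.

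For part (ii), the work is even lighter. If $W\in \mathscr{F}$ deformation retracts onto $A$, then by the inclusion $\mathscr{F}\subseteq \mathscr{G}$ established above, the very same $W$ lies in $\mathscr{G}$. Since being a (strong) neighbourhood deformation retract is a property of the pair $(W,A)$ in $X$ and does not depend on the filter, $W$ remains a valid witness, proving that $A$ is a $\mathscr{G}$-neighbourhood deformation retract.

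There is no genuine obstacle here; the argument is essentially bookkeeping. The only conceptual point worth emphasizing — and the reason the paper flags a \emph{duality} — is the quantifier structure: being a $\mathscr{F}$-weak deformation retract is a universally quantified condition over the filter (so it becomes easier as the filter shrinks, explaining the direction finer $\mathscr{G} \Rightarrow$ coarser $\mathscr{F}$), whereas being a $\mathscr{F}$-neighbourhood deformation retract is an existentially quantified condition (so it becomes easier as the filter grows, explaining the direction $\mathscr{F} \Rightarrow$ finer $\mathscr{G}$). The lemma simply records the two correct monotonicity directions.
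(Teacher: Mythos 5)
Your argument is correct and follows essentially the same route as the paper: part (i) uses the refinement to find $V\in\mathscr{G}$ with $V\subseteq U$ and then invokes the $\mathscr{G}$-weak hypothesis, and part (ii) uses superset closure of $\mathscr{G}$ to conclude the witness $W\in\mathscr{F}$ already lies in $\mathscr{G}$. Your upfront observation that $\mathscr{F}\leq\mathscr{G}$ is equivalent to $\mathscr{F}\subseteq\mathscr{G}$, and your explicit transitivity step $A\subseteq W\subseteq V\subseteq U$ in (i), are slightly cleaner than the paper's phrasing but not a different proof.
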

\begin{proof}
(i) Since $\mathscr{F}\leq \mathscr{G}$, $U\in \mathscr{F}\implies \exists V\in \mathscr{G}:V\subseteq U$, any $U\in \mathscr{F}$ contains a strong deformation retract of $X$, namely, some $V\in \mathscr{G}$. 

(ii) Since there is a $W\in \mathscr{F}$ that deformation retracts onto $A$ and $\mathscr{F}\leq \mathscr{G}$, there is always a $V\in \mathscr{G}$ such that $V\subseteq W$ and thus $W$ must be in $\mathscr{G}$ by superset completion (filter property (ii)).   
\end{proof}

\begin{lemma}[A strong deformation retract factored through filters]
\label{lem:strong:def:retract:filter}
Suppose we work with filters that are neighbourhood filters on $X$ with respect to $A\subseteq X$.

\begin{enumerate}[(i)]
\item Let $\mathscr{F}$ be a filter such that $A$ is a strong $\mathscr{F}$-neighbourhood deformation retract. Then, $A$ is a strong deformation retract of $X$ if and only if there is a filter $\mathscr{F}'$ such that $\mathscr{F}\leq \mathscr{F}'$ and $A$ is a $\mathscr{F}'$-weak deformation retract of $X$.
\item Let $\mathscr{G}$ be a filter such that $A$ is a $\mathscr{G}$-weak deformation retract. Then, $A$ is a strong deformation retract of $X$ if and only if there is a filter $\mathscr{G}'$ such that $\mathscr{G}'\leq \mathscr{G}$ and $A$ is a strong $\mathscr{G}'$-neighbourhood deformation retract of $X$.
\end{enumerate}

\end{lemma}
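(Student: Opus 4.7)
The plan is to treat both parts uniformly by exploiting a common \emph{sandwiching} pattern: in each case the ``coarser'' filter supplies a single neighbourhood $W$ of $A$ that strongly deformation retracts onto $A$, while the ``finer'' filter guarantees that every one of its members -- in particular some subset $V\subseteq W$ -- contains a strong deformation retract $V'$ of $X$ with $A\subseteq V'\subseteq W$. The conclusion is then obtained by concatenating the two homotopies.

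For the ``if'' direction of (i), the strong $\mathscr{F}$-neighbourhood hypothesis supplies $W\in\mathscr{F}$, a retraction $r_W:W\to A$ and a homotopy $H_W:W\times[0,1]\to W$ from $\mathrm{id}_W$ to $\iota_A\circ r_W$ stationary on $A$. Applying $\mathscr{F}\leq\mathscr{F}'$ to $W\in\mathscr{F}$ yields some $V\in\mathscr{F}'$ with $V\subseteq W$; the $\mathscr{F}'$-weak deformation retract property then provides a strong deformation retract $V'\supseteq A$ of $X$ with $V'\subseteq V$, say with retraction $r_X:X\to V'$ and homotopy $K:X\times[0,1]\to X$ stationary on $V'$. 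The inclusion chain $A\subseteq V'\subseteq V\subseteq W$ makes $r_W\circ r_X:X\to A$ a well-defined retraction, and the concatenation
\begin{equation*}
\widetilde{H}(x,t) = \begin{cases} K(x,2t), & t\in[0,\tfrac{1}{2}],\\ H_W\bigl(K(x,1),2t-1\bigr), & t\in[\tfrac{1}{2},1], \end{cases}
\end{equation*}
defines a continuous map $X\times[0,1]\to X$ via the pasting lemma (the two branches agree at $t=\tfrac{1}{2}$), with $\widetilde{H}(\cdot,0)=\mathrm{id}_X$, $\widetilde{H}(\cdot,1)=r_W\circ r_X$, and $\widetilde{H}(a,t)=a$ for $a\in A$, since $A\subseteq V'$ forces $K(a,\cdot)=a$ and then $H_W(a,\cdot)=a$. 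This exhibits $A$ as a strong deformation retract of $X$.

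The ``if'' direction of (ii) follows by the same argument after renaming: from $W\in\mathscr{G}'$ strongly deformation retracting onto $A$, pick $V\in\mathscr{G}$ with $V\subseteq W$ via $\mathscr{G}'\leq\mathscr{G}$, then use the $\mathscr{G}$-weak property to obtain $V'\supseteq A$ inside $V$ that is a strong deformation retract of $X$, and concatenate exactly as above. The ``only if'' directions are trivial: if $A$ is already a strong deformation retract of $X$, then in (i) take $\mathscr{F}'=\mathscr{F}$, observing that every $U\in\mathscr{F}$ contains $A$ and that $A$ itself is then a strong deformation retract of $X$ sitting inside $U$; in (ii) take $\mathscr{G}'=\mathscr{G}$, observing that $X\in\mathscr{G}$ and that $X$ strongly deformation retracts onto $A$ by assumption.

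The main obstacle is purely bookkeeping: the continuity and $A$-stationarity of $\widetilde{H}$ must be read off from the nested inclusions $A\subseteq V'\subseteq W$, since if $A$ were not contained in $V'$ the first half of the concatenation would not fix $A$. In particular, no compactness, local compactness, or metric hypothesis on $X$ enters this lemma; it is a clean statement about filters of neighbourhoods and how the three notions of retraction introduced in Section~\ref{sec:retract:and:filters} interact under refinement.
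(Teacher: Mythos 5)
Your proof is correct and follows essentially the same route as the paper: the nontrivial direction is the same composition/concatenation of the two homotopies through the nested neighbourhoods $A\subseteq V'\subseteq V\subseteq W$ (which the paper dispatches with ``by composition'' and you spell out explicitly), and the converse direction is handled by a trivial filter choice, where you take $\mathscr{F}'=\mathscr{F}$ resp.\ $\mathscr{G}'=\mathscr{G}$ while the paper takes $\mathscr{F}'=\mathscr{F}_\tau$ resp.\ $\mathscr{G}'=\{X\}$ --- all of these choices are valid.
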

\begin{proof}
(i) Suppose there is a $\mathscr{F}'$ that complies with the statement from the lemma, then, any $U\in \mathscr{F}'$ contains a strong deformation retract of $X$. As $\mathscr{F}'$ is finer than $\mathscr{F}$, there is always a $U$ contained in the neighbourhood $V\in \mathscr{F}$ that strongly deformation retracts onto $A$. Therefore, $X$ strongly deformation retracts onto $A$, \textit{e.g.}, by composition. 

Now suppose that $X$ strongly deformation retracts onto $A$, while $A\subseteq X$ is a strong $\mathscr{F}$-neighbourhood deformation retract. Clearly, $A$ is a weak deformation retract of $X$ in the standard sense, thus, $A$ is a $\mathscr{F}_{\tau}$-weak deformation retract of $X$; and $\mathscr{F}_{\tau}$ is the finest neighbourhood filter on $X$. 

(ii) Similar to (i), suppose that such a $\mathscr{G}'$ exists. Any $U\in \mathscr{G}$ contains a strong deformation retract of $X$ and since $\mathscr{G}'\leq \mathscr{G}$, we can find for any $V\in \mathscr{G}'$ a $U$ such that $U\subseteq V$. Doing this for the $V$ that strongly deformation retracts onto $A$ concludes this step. 

Now suppose that $X$ strongly deformation retracts onto $A$, then $X$ is a trivially a strong neighbourhood deformation retract of $A$. In particular, the coarsest (trivial) filter $\mathscr{G}':=\{X\}$ satisfies the requirements of the lemma.    
\end{proof}

We end this subsection by recalling that if $A$ is compact, then, $\mathscr{F}_{\tau}$ and $\mathscr{F}_d$ are equivalent, which is a well-known result, \textit{e.g.}, see \cite[\S 27]{ref:munkrestopology}. One may also note that when $A$ is compact, $\mathscr{F}_d$ is cofinal in $\mathscr{F}_{\tau}$, \textit{e.g.}, see \cite[Prop. 5.1]{ref:gobbino2001topological}. To keep the work remotely self-contained, we collect a proof. 
\begin{lemma}[On $\varepsilon$-neighbourhoods]
\label{lem:eps:nbhd:U}
Let $A$ be a compact subset of the metric space $(X,d)$. For any open neighbourhood $U$ of $A$, there is a $\varepsilon>0$ such that $N_{\varepsilon}(A;d)\subseteq U$.
\end{lemma}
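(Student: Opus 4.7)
The plan is to convert the problem into one about a continuous function attaining a positive infimum on the compact set $A$, using the complementary closed set $X\setminus U$.

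First, I would set $C:=X\setminus U$, which is closed because $U$ is open, and consider the function $\varphi:X\to\mathbb{R}_{\geq 0}$ defined by $\varphi(x):=d(x,C)$. By the $1$-Lipschitz property of distance-to-a-set recalled at the start of the preliminaries, $\varphi$ is continuous. Since $A\subseteq U$, we have $A\cap C=\varnothing$, and since $C$ is closed, $\varphi(a)>0$ for every $a\in A$.

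Next, I would invoke compactness of $A$ to conclude that $\varphi|_A$ attains its infimum at some point $a^\star\in A$, and set $\varepsilon:=\varphi(a^\star)>0$. The claim is that this $\varepsilon$ works, i.e., $N_{\varepsilon}(A;d)\subseteq U$. To verify this, take any $x\in N_{\varepsilon}(A;d)$, so that $d(x,A)<\varepsilon$. By definition of the infimum, there is some $a\in A$ with $d(x,a)<\varepsilon\leq d(a,C)$. If $x$ were in $C$, then by definition of $d(a,C)$ we would have $d(a,C)\leq d(a,x)<d(a,C)$, a contradiction. Hence $x\in X\setminus C = U$, establishing the desired inclusion.

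There is no real obstacle; the only subtle point is making sure that the compactness is used to turn the pointwise positivity $\varphi|_A>0$ into a \emph{uniform} positive lower bound $\varepsilon>0$, which is precisely where the hypothesis on $A$ enters. Without compactness the infimum could be zero (as illustrated by the spaces appearing later in the paper), which is exactly the reason $\mathscr{F}_{\tau}$ and $\mathscr{F}_d$ may differ for closed non-compact $A$.
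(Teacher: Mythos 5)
Your proof is correct and follows essentially the same route as the paper: both define $\varepsilon$ as the minimum over the compact set $A$ of the continuous function $x\mapsto d(x,X\setminus U)$ and then verify the inclusion $N_{\varepsilon}(A;d)\subseteq U$ by a short contradiction. (The only cosmetic difference is that the paper dispenses with the degenerate case up front, whereas in your write-up the case $U=X$, i.e.\ $C=\varnothing$, should be noted as trivial; otherwise the arguments coincide.)
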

\begin{proof}
If $A=X$, we are done, so suppose $A\neq X$. Thus, pick any open neighbourhood $U$ of $A$, then $X\ni x\mapsto d(x,X\setminus U)\in \mathbb{R}_{\geq 0}$ is continuous. Now, since $A$ is compact (and thus closed as $X$ is Hausdorff), $X\setminus U$ is closed, $A\cap (X\setminus U) = \varnothing$ and $\varepsilon:=\min_{x\in A}d(x,X\setminus U)>0$. At last, to show that $N_{\varepsilon}(A;d)\subseteq U$, suppose it is not, then, there is a point $x'\in X\setminus U$ such that $d(x',A)<\varepsilon$, contradicting the definition of $\varepsilon$.  
\end{proof}

Lemma~\ref{lem:eps:nbhd:U} is precisely the reason why for compact attractors several definitions of stability are equivalent, \textit{e.g.}, using $\mathscr{F}_d$ or $\mathscr{F}_{\tau}$. 

To further illustrate this, suppose that $A=\{0\}^n$ is Lyapunov stable under some smooth ODE on $(\mathbb{R}^n,d(x,y):=\|x-y\|_2)$, say $\dot{x}=f(x)$ and let $\varphi$ be the corresponding flow. In this case, $N_{\varepsilon}(A;d)=B_{\varepsilon}(0;d)$. The standard ``\textit{topological}'' definition of Lyapunov stability of $A$ under $\varphi$ would say that for any open neighbourhood $U\ni 0$ there is another open neighbourhood $V\ni 0$ such that $\{\varphi^t(x)\,|\, x\in V,\, t\geq 0\}\subseteq U$. Clearly, this must be true for $U:=B_{\varepsilon}(0;d)$, but then by Lemma~\ref{lem:eps:nbhd:U}, we can find some $\delta(\varepsilon)>0$ such that $B_{\delta(\varepsilon)}(0;d)\subseteq V$ and thus $\{\varphi^t(x)\,|\, x\in B_{\delta(\varepsilon)}(0;d),\, t\geq 0\}\subseteq B_{\varepsilon}(0;d)$, which would be the ``\textit{metrical}'' definition of Lyapunov stability. For the other direction, given some open neighbourhood $U$, again by Lemma~\ref{lem:eps:nbhd:U} we can find a $\varepsilon>0$ such that $B_{\varepsilon}(0;d)\subseteq U$, then set $V:=B_{\delta(\varepsilon)}(0;d)$.   

Indeed, if we relax compactness to closedness, then, Lemma~\ref{lem:eps:nbhd:U} fails to be true in general. 

\begin{example}[Open neighbourhoods of closed subsets 1]
\label{ex:open:nbhd:closed:1}
\upshape{
Let the locally compact metric space $(X,d)$ be given by $X:=\mathbb{R}^2$ and $d(x,y):=\|x-y\|_2$. Now consider $A:=\{(x_1,x_2)\in \mathbb{R}^2\,|\, x_2 = 0\}$, which is a closed but non-compact subset. Then, construct the open neighbourhood $U:=\{(x_1,x_2)\in \mathbb{R}^2\,|\, |x_2| < e^{-x_1^{-2}}\}$ of $A$. Clearly, there is no $\varepsilon>0$ such that $N_{\varepsilon}(A;d)=\{(x_1,x_2)\in \mathbb{R}^2\,|\,|x_2|< \varepsilon \}$ is contained in $U$, see Figure~\ref{fig:xaxisclosed} $(ii)$.    
\exampleEnd 
}
\end{example} 

    \begin{figure}
        \centering
        \includegraphics[scale=1]{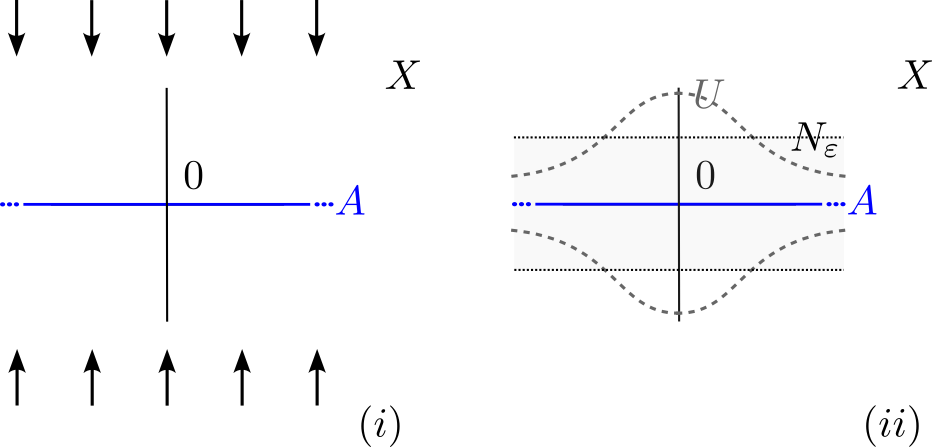}
        \caption{Example~\ref{ex:open:nbhd:closed:1}: for closed, but non-compact, subsets $A$ of a metric space $X$ it is not true that any open neighbourhood $U$ of $A$ contains a metric neighbourhood $N_{\varepsilon}$ of $A$.}
        \label{fig:xaxisclosed}
    \end{figure}

\subsection{Cofibrations}
Previously, we used the notion of a \textit{cofibration} to capture when a compact attractor $A$ is a strong deformation retract of its basin of attraction $B(A)$ \cite{ref:jongeneelECC24}. In particular, cofibrations are closely related to neighbourhood deformation retracts. As before, we need to adjust these notions for filters. 

To define cofibrations, we need the following. 
Let $X$ be a topological space and $A\subseteq X$, then, a pair $(X,A)$ has the \textit{homotopy extension property} (HEP) when, for any $Y$, the diagram 
\begin{equation}
\label{equ:diag1}
\begin{tikzcd}[row sep=normal, column sep=large]
(A\times [0,1]) \cup (X \times \{0\}) \arrow{r}{} \arrow[hookrightarrow]{d} & Y  \\
X\times [0,1] \arrow[dotted, bend right]{ur}
\end{tikzcd}
\end{equation}
can always be completed (``completed'' means that the \textit{dotted arrow} \dottedarrow\, can be found) to be commutative. Note, the arrows are continuous maps. Thus, more explicitly, given a a homotopy $H:A\times [0,1]\to Y$ and some map $g:X\to Y$ such that $H(\cdot,0)=g|_A$, one needs to be able to extend the homotopy from $A$ to $X$.
Pick $Y=(A\times [0,1]) \cup (X \times \{0\})$, then we see that $(X,A)$ having the HEP implies that $(A\times [0,1]) \cup (X \times \{0\})$ is a retract of $X\times [0,1]$. On the other hand, one can show that the existence of such a retract implies that $(X,A)$ has the HEP, that is, these two notions are equivalent, see Theorem~\ref{thm:cofib:retract:ndr}. 

Then, a continuous map $i:A\to X$ is said be a \textbf{\textit{cofibration}} if the following commutative diagram 
\begin{equation}
\label{equ:diag2}
\begin{tikzcd}[row sep=normal,column sep=large]
A \times \{0\} \arrow{d} \arrow[hookrightarrow]{r}               & A\times [0,1] \arrow{d} \arrow[rdd, bend left,"F"] &   \\
X\times \{0\} \arrow[hookrightarrow]{r} \arrow[rrd, bend right,"f"] & X\times [0,1] \arrow[rd, dotted, "\widetilde{F}"]               &   \\
                                    &                                    & Y
\end{tikzcd}
\end{equation}
can be completed for any triple $(f,F,Y)$, \textit{i.e.}, we can find $\widetilde{F}$.

Loosely speaking, the map $i$ is a cofibration when it has the HEP\footnote{We focus on pairs $(X,A)$ such that $A\subseteq X$, this inclusion is, however, not required for a cofibration to be well-defined. To make sense of~\eqref{equ:diag1} one should work with the \textit{mapping cylinder}.}. 

We have introduced the notation $(f,F,\widetilde{F})$ so that we can easily define what we call a $\mathscr{F}$-cofibration.

\begin{definition}[$\mathscr{F}$-cofibration]
\label{def:F:cofib}
Let $A\subseteq X$ be closed and let $\mathscr{F}$ be some neighbourhood filter of $A$. The inclusion $\iota_A : A \hookrightarrow X$ is a $\mathscr{F}$-cofibration when there is a $U\in \mathscr{F}$ such that~\eqref{equ:diag2} can be completed, for any triple $(f,F,Y)$, with additionally $\widetilde{F}$ satisfying $\widetilde{F}(U\times \{1\}) \subseteq  F(A\times (0,1])$.
\end{definition}
If $\iota_A : A \hookrightarrow X$ is a $\mathscr{F}$-cofibration and $\mathscr{F}\leq \mathscr{F}'$ then $\iota_A : A \hookrightarrow X$ is also a $\mathscr{F}'$-cofibration. Indeed, below we clarify that a cofibration is a $\mathscr{F}_{\tau}$-cofibration. It turns out that we need $\mathscr{F}_d$-cofibrations to capture the right notion of convergence to be able to answer our question.

Next, we need another slight variation of the aforementioned notions of retraction, that of a \textit{neighbourhood deformation retract pair} (NDR pair).

\begin{definition}[NDR pair]
\label{def:NDR}
A pair $(X,A)$ is said to be an NDR pair if:
\begin{enumerate}[(i)]
    \item there is a continuous map $u:X\to [0,1]$ such that $A=u^{-1}(0)$; and
    \item there is a homotopy $H:X\times [0,1]\to X$ such that $H(x,0)=x$ for all $x\in X$, $H(a,s)=a$ for all $(a,s)\in A\times [0,1]$ and $H(x,1)\in A$ if $u(x)<1$.
\end{enumerate} 
\end{definition}

Importantly, in Definition~\ref{def:NDR} one can restrict the homotopy to the ``\textit{deformation}'' $H:W\times [0,1]\to X$, for $W:=u^{-1}([0,1))$. See \cite[ch. IV-VII]{ref:hu1965} for more on deformations.

See that if $u(x)<1$ $\forall x\in X$, then, $A$ is a strong deformation retract of $X$. In general, however, we cannot assume $u$ to be of this form, see also \cite[App.]{ref:mayloopspaces1972} for the notion of a \textit{strong} NDR pair. See that for $(X,A)$ to be an NDR pair, $A$ must be closed. Now, a useful result is the following.
\begin{theorem}[{\cite[Ch. VII]{ref:bredon1993topology}}, {\cite[Ch.~6]{may1999concise}}]
\label{thm:cofib:retract:ndr}
    Let $A$ be closed in $X$, then, the following are equivalent:
    \begin{enumerate}[(i)]
        \item the inclusion $\iota_A:A\hookrightarrow X$ is a cofibration;
        \item \label{item:retract} $(A\times [0,1])\cup (X\times\{0\})$ is a retract of $X\times [0,1]$;
        \item \label{item:NDR} $(X,A)$ is an NDR pair. 
    \end{enumerate}
\end{theorem}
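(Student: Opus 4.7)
The plan is to establish the equivalences by first handling $(\text{i}) \Leftrightarrow (\text{ii})$ formally from the universal property of the HEP, and then proving $(\text{ii}) \Leftrightarrow (\text{iii})$ by explicit construction in both directions.

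For $(\text{i}) \Leftrightarrow (\text{ii})$, the retract in (ii) is precisely the $\widetilde{F}$ produced by the cofibration property when diagram~\eqref{equ:diag2} is specialised to $Y := (A\times[0,1])\cup(X\times\{0\})$ with $f$, $F$ the canonical inclusions. The converse uses the pasting lemma (legitimate because $A$ is closed in $X$, hence $A\times[0,1]$ is closed in $X\times[0,1]$) to turn any triple $(f,F,Y)$ into a single continuous map out of $(A\times[0,1])\cup(X\times\{0\})$, which is then precomposed with the retract to supply $\widetilde{F}$. This direction is bookkeeping.

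For $(\text{iii}) \Rightarrow (\text{ii})$, I would use the NDR data $(u,H)$ to build a retract $r : X\times[0,1]\to (A\times[0,1])\cup(X\times\{0\})$ by the piecewise formula
\[
r(x,t) = \begin{cases} (H(x,\,t/u(x)),\,0) & \text{if } t \leq u(x), \\ (H(x,1),\,t-u(x)) & \text{if } t \geq u(x), \end{cases}
\]
extended by $r(a,t)=(a,t)$ on $A$ where the ratio is ill-defined. The two branches agree at the seam $t=u(x)$ with common value $(H(x,1),0)$; when $u(x)<1$, the requirement $H(x,1)\in A$ keeps the second branch in $A\times[0,1]$; when $u(x)=1$, only the first branch is active and the image lies in $X\times\{0\}$.

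For $(\text{ii}) \Rightarrow (\text{iii})$, which is the technical core, write the retract as $r = (\rho,\sigma)$ and set
\[
u(x) := \sup_{t\in[0,1]} \bigl(t - \sigma(x,t)\bigr), \qquad H(x,s) := \rho(x,s).
\]
Continuity of $u$ is standard from compactness of $[0,1]$ and joint continuity of $\sigma$. The inclusion $A \subseteq u^{-1}(0)$ is immediate from $\sigma(a,t)=t$, and the three properties required of $H$ in Definition~\ref{def:NDR} fall out of $\rho$ being the first coordinate of a retract: $H(x,0)=x$; $H(a,s)=a$ on $A$; and $u(x)<1$ forces $\sigma(x,1)>0$, whence the image constraint on $r$ pins $H(x,1)=\rho(x,1)$ to $A$.

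The single genuine obstacle is the reverse inclusion $u^{-1}(0) \subseteq A$, which is precisely where closedness of $A$ does essential work: if $u(x)=0$, then $\sigma(x,t)=t$ for every $t$, so $\sigma(x,t)>0$ for every $t>0$, and the image constraint of the retract forces $\rho(x,t)\in A$ for all such $t$; continuity of $\rho$ with $\rho(x,0)=x$ then exhibits $x$ as a limit of points of $A$, hence $x\in A$. Once this crux is in place, the rest is diagram chasing and routine case-checking.
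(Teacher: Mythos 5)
Your proof is correct and is essentially the standard Bredon/May argument that the paper cites for this theorem (and that the paper itself adapts when proving Theorem~\ref{thm:F:cofib:retract:ndr}): the same seam-glued retraction formula for (iii)$\Rightarrow$(ii), and the same extraction of $u$ from the $[0,1]$-coordinate of the retraction for (ii)$\Rightarrow$(iii). One cosmetic slip: $u(x)=0$ only gives $\sigma(x,t)\geq t$ rather than $\sigma(x,t)=t$, but this still yields $\sigma(x,t)>0$ for all $t>0$, which is all your crux argument for $u^{-1}(0)\subseteq A$ actually uses.
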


To capture that we want a specific deformation, we also adapt the notion of an NDR pair to a neighbourhood filter. The definition emphasizes the deformed neighbourhood $U$.  
\begin{definition}[$\mathscr{F}$-NDR pair]
\label{def:F:NDR}
A pair $(X,A)$ is said to be an $\mathscr{F}$-NDR pair when there is a $U\in \mathscr{F}$ and a continuous map $u:X\to [0,1]$ such that 
\begin{enumerate}[(i)]
    \item $A=u^{-1}(0)$;
    \item $u(x)=1$ for all $x\in X\setminus U$; and
    \item there is a homotopy (deformation) $H:U\times [0,1]\to X$ such that $H(u,0)=u$ for all $u\in U$, $H(a,s)=a$ for all $(a,s)\in A\times [0,1]$ and $H(u,1)\in A$ for all $u\in U$.
\end{enumerate} 
\end{definition}
See that if $(X,A)$ is a $\mathscr{F}$-NDR pair, then there is a $U\in \mathscr{F}$ such that $U\to A$ is a retract, \textit{i.e.}, for $H$ the homotopy from Definition~\ref{def:F:NDR}, select $H(\cdot,1)$. Differently put, $A$ is a $\mathscr{F}$-neighbourhood retract of $X$. 

\begin{remark}[$\mathscr{F}_d$-NDR pairs and metric neighbourhoods]
\label{rem:Fd:NDR}
\upshape{
Observe that if $(X,A)$ is an $\mathscr{F}_d$-NDR pair, then since there is $\varepsilon>0$ such that $N_{\varepsilon}(A;d)\subseteq U$, we can redefine $u$ to be $x\mapsto u(x) := \min \{1,\varepsilon^{-1} d(x,A)\}$, that is, we collapse $U$ to $N_{\varepsilon}(A;d)$. We exploit this below. 
\exampleEnd
}
\end{remark}

The next result is an adjustment of known results regarding the characterization of cofibrations (\textit{e.g.}, Theorem~\ref{thm:cofib:retract:ndr}), specialized to $\mathscr{F}_d$-cofibrations.

\begin{theorem}[$\mathscr{F}_d$-cofibration]
\label{thm:F:cofib:retract:ndr}
    Let $A$ be closed in $X$, then, the following are equivalent:
    \begin{enumerate}[(i)]
        \item the inclusion $\iota_A:A\hookrightarrow X$ is a $\mathscr{F}_d$-cofibration;
        \item \label{item:F:retract} $r:X\times [0,1]\to (A\times [0,1])\cup (X\times\{0\})$ is a retract with $r(V\times \{1\})\subseteq A\times (0,1]$ for some $V\in \mathscr{F}_d$; and 
        \item \label{item:F:NDR} $(X,A)$ is an $\mathscr{F}_d$-NDR pair. 
    \end{enumerate} 
\end{theorem}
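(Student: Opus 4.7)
The plan is to establish the three-way equivalence cyclically, $(\mathrm{i})\Rightarrow(\mathrm{ii})\Rightarrow(\mathrm{iii})\Rightarrow(\mathrm{i})$, by adapting the classical Strøm/May proof of Theorem~\ref{thm:cofib:retract:ndr} while carefully bookkeeping the metric neighbourhood $U\in\mathscr{F}_d$ that distinguishes $\mathscr{F}_d$-cofibrations from ordinary ones. Set $M_A := (A\times [0,1])\cup (X\times \{0\})$ throughout.

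For $(\mathrm{i})\Rightarrow(\mathrm{ii})$, I would apply Definition~\ref{def:F:cofib} to the universal pair with $Y:=M_A$, with $f:X\times\{0\}\hookrightarrow Y$ and $F:A\times[0,1]\hookrightarrow Y$ the canonical inclusions. The $\mathscr{F}_d$-cofibration produces $\widetilde{F}:X\times[0,1]\to M_A$; on $M_A$ it restricts to the identity, so $r:=\widetilde{F}$ is a retract. The extra clause $\widetilde{F}(U\times\{1\})\subseteq F(A\times(0,1])$ is exactly $r(U\times\{1\})\subseteq A\times(0,1]$, so $V:=U$ works.

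For $(\mathrm{ii})\Rightarrow(\mathrm{iii})$, write $r=(r_1,r_2)$ and set
\[
u(x) \;:=\; \sup_{t\in[0,1]}\bigl|t-r_2(x,t)\bigr|, \qquad H(x,t)\;:=\;r_1(x,t).
\]
Continuity of $u$ follows from compactness of $[0,1]$ and joint continuity of $r_2$. For $a\in A$ one has $r(a,t)=(a,t)$, so $u(a)=0$. Conversely, if $u(x)=0$ then $r_2(x,t)=t$ for all $t$, which forces $r_1(x,t)\in A$ whenever $t>0$ (since $r(x,t)\in M_A$); letting $t\to 0^+$ and using closedness of $A$ gives $x=r_1(x,0)\in A$. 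Hence $u^{-1}(0)=A$. On $[0,1]$ one has $|t-r_2(x,t)|\le \max(t,1-t)<1$ for $t\in(0,1)$, and the boundary values are $0$ at $t=0$ and $|1-r_2(x,1)|$ at $t=1$; thus $u(x)=1$ iff $r_2(x,1)=0$. For $x\in V$ we have $r_2(x,1)>0$, so $V\subseteq u^{-1}([0,1))=:U$; since $V\supseteq N_\varepsilon(A;d)$ for some $\varepsilon>0$, filter property (ii) places $U\in\mathscr{F}_d$ and $u\equiv 1$ on $X\setminus U$. Finally, $H$ satisfies $H(\cdot,0)=\mathrm{id}$, $H(a,\cdot)=a$, and for $x\in U$ the inequality $r_2(x,1)>0$ forces $r_1(x,1)=H(x,1)\in A$.

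The main obstacle is $(\mathrm{iii})\Rightarrow(\mathrm{i})$. By Remark~\ref{rem:Fd:NDR} I would first normalise to $u(x)=\min\{1,\varepsilon^{-1}d(x,A)\}$ with $U=N_\varepsilon(A;d)$. The standard Strøm-style formula
\[
\widetilde{F}(x,t) \;:=\; \begin{cases} f\bigl(h(x,t)\bigr) & \text{if } u(x)\ge t,\\[2pt] F\bigl(h(x,t),\,t-u(x)\bigr) & \text{if } u(x)<t\end{cases}
\]
defines the required extension of any test triple $(f,F,Y)$, provided one has a deformation $h:X\times[0,1]\to X$ with $h(x,0)=x$, $h(a,t)=a$, and crucially $h(x,t)\in A$ whenever $t>u(x)$. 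The upgrade of the given $H:U\times[0,1]\to X$ to such an $h$ is the technical heart: I would reparametrize via $h(x,t):=H(x,\min\{1,t/u(x)\})$ on $U\setminus A$, set $h(a,t):=a$ on $A$ and $h(x,t):=x$ on $X\setminus U$, using a cutoff on a collar $u^{-1}([\alpha,1))$ (some $\alpha<1$) that kills the reparametrization near $\partial U$ so that continuity matches the identity assignment outside $U$. The metric form of $u$ is what makes this collar construction routine. Once $h$ is built, continuity of $\widetilde{F}$ is verified on the overlap locus $u(x)=t$ by $F(h(x,t),0)=f(h(x,t))$, compatibility with $f,F$ is immediate, and at $t=1$ every $x\in V':=u^{-1}([0,1))\supseteq N_\varepsilon(A;d)\in\mathscr{F}_d$ lies in the lower branch with $h(x,1)\in A$ and $1-u(x)>0$, so $\widetilde{F}(x,1)\in F(A\times(0,1])$, as required.
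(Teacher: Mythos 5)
Your proof follows essentially the same Str{\o}m--Bredon blueprint as the paper: the two implications between (i) and (ii) are handled identically (universal example $Y:=(A\times[0,1])\cup(X\times\{0\})$ in one direction, post-composition of $f\cup F$ with the retraction in the other), and your data $u(x)=\sup_{t}|t-r_2(x,t)|$, $H=r_1$ for (ii)$\Rightarrow$(iii) is exactly the paper's $u(x)=\max_t|t-\pi_{[0,1]}(r(x,t))|$, $H=\pi_X\circ r$; the paper simply outsources the remaining constructions to \cite[p.~432]{ref:bredon1993topology} and annotates the metric bookkeeping, whereas you spell them out and close the cycle with a direct (iii)$\Rightarrow$(i). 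Your (i)$\Rightarrow$(ii) and (ii)$\Rightarrow$(iii) steps are correct and in fact more explicit than the paper's.

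There is, however, one internal inconsistency at the end of (iii)$\Rightarrow$(i). Once you introduce the collar cutoff on $u^{-1}([\alpha,1))$ so that $h$ agrees with the identity on $X\setminus U$, you cannot retain both ``$h(x,t)\in A$ whenever $t>u(x)$'' and ``$V'=u^{-1}([0,1))=N_\varepsilon(A;d)$'': if $x_0\in\partial N_\varepsilon(A;d)$ is approached by $x_n\in U$ with $t_n\in(u(x_n),1]$ and $t_n\to 1$, then $h(x_n,t_n)\in A$, and continuity of $h$ together with closedness of $A$ would force $x_0=h(x_0,1)\in A$, contradicting $d(x_0,A)=\varepsilon$. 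Equivalently, on the collar the deformation is deliberately prevented from reaching $A$, so $h(\cdot,1)\notin A$ there. The repair is the one the paper itself records: replace $u$ by $\tilde u:=\min\{1,u/\alpha\}$ in the branch condition of $\widetilde{F}$ and in the membership requirement for $h$, so that the deformation completes only on $\tilde u^{-1}([0,1))=N_{\alpha\varepsilon}(A;d)$ and hence $V'=N_{\alpha\varepsilon}(A;d)$ (this is the paper's $W=N_{(1/2)\varepsilon}(A;d)$). Since any such smaller metric neighbourhood still lies in $\mathscr{F}_d$, the conclusion is unaffected; only your identification of $V'$ needs correcting.
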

\begin{proof}
We can largely follow \cite[Thm. VII.1.5]{ref:bredon1993topology}.   

$(ii)\implies (iii)$. We assume that $X\neq A$, otherwise the result is trivial. Let $r$ be the retraction at hand, as $r(V\times \{1\})\subseteq A \times (0,1]$ for some $V\in \mathscr{F}_d$, we have for $s:=r(\cdot,1)$ that $V\subseteq s^{-1}(A\times (0,1])=:U$ and thus, $U\in \mathscr{F}_d$ as $V\in \mathscr{F}_d$. Then, one readily sees that $H := \pi_X \circ r : U \times [0,1]\to X$ checks out as a homotopy in Definition~\ref{def:F:NDR}. Also, one can select $X\ni x\mapsto u(x) := \max_{t\in [0,1]}|t-\pi_{[0,1]}(r(x,t))|$ and see that this map $u$ satisfies item $(i)$ and $(ii)$ of Definition~\ref{def:F:NDR}. To see that $u$ is continuous, one may appeal to results known in optimization theory \cite{ref:berge1963topological} or to a simpler subbase argument as in \cite[p. 432]{ref:bredon1993topology}. 

$(iii)\implies (ii)$. We assume that $X\neq A$, otherwise the result is trivial. The result follows from \cite[p. 432]{ref:bredon1993topology}. However, we emphasize that one does not readily have $V=U$ for $U\in \mathscr{F}_d$ from the $\mathscr{F}_d$-NDR condition. To be precise, in line with Remark~\ref{rem:Fd:NDR}, we may assume that $U=N_{\varepsilon}(A;d)$ for some $\varepsilon>0$. Then, following \cite[p. 432]{ref:bredon1993topology} we see that to get the desired map $r$, or $s=r(\cdot,1)$ for that matter, we need to consider the set $\{x\in U\,|\, 1-2u(x)>0\}$, which is precisely $W:=N_{(1/2)\varepsilon}(A;d)$, that is, we get that $s^{-1}(A\times (0,1])=W\in \mathscr{F}_d$.    

$(i)\implies (ii)$. Let $U\in \mathscr{F}_d$ be the neighbourhood of $A$ from Definition~\ref{def:F:cofib}. Pick $Y:= (A\times [0,1])\cup (X\times \{0\})$ and let $f$ and $F$ be inclusion maps. Then, $\widetilde{F}$ is the retraction $r$.    

$(ii)\implies (i)$. Let $r$ be the retraction and set $\widetilde{F}:=(f\cup F)\circ r$, for some $Y$. It follows that $\widetilde{F}$ factors as in Definition~\ref{def:F:cofib}.  
\end{proof}

\begin{remark}[Homotopy equivalence]
\label{rem:homotopic:factor}
\upshape{
Through the equivalence with $\mathscr{F}_d$-NDR pairs, we see that $A\hookrightarrow X$ being a $\mathscr{F}_d$-cofibration implies the existence of a $U\in \mathscr{F}_d$ such that for the retract $r:U\to A$ and inclusion maps $\iota_A:A\hookrightarrow X$ and $\iota_U : U \hookrightarrow X$ we have the homotopy equivalence $\iota_ U \simeq_h \iota_A \circ r$ as maps from $U$ to $X$. This enforces a relation between topological properties of the triple $(A,U,X)$. On the other hand, see that if $A$ is a $\mathscr{F}_d$-neighbourhood deformation retract of $U$, we have $\mathrm{id}_U \simeq_h \iota_A \circ r$, for $\iota_A : A \hookrightarrow U$.   
\exampleEnd
}
\end{remark}

\subsection{Dynamical systems}
We will largely follow \cite{ref:bhatiahajek2006local} and especially \cite{ref:bhatia1970stability}.  

In this work we study \textbf{\textit{semi-dynamical systems}} comprised of the triple $(X,\mathbb{R}_{\geq 0},\varphi)$. Here,  $X$ is a metric space and $\varphi:X\times \mathbb{R}_{\geq 0} \to X$ is a (global) semi-flow, that is, a map that satisfies for any $x\in X$:
\begin{enumerate}[(i)]
    \item $\varphi(x,0)=x$ (the \textit{initial value} axiom); 
    \item $\varphi(\varphi(x,t),s)=\varphi(x,t+s)$ $\forall s,t \in \mathbb{R}_{\geq 0}:=\{t\in \mathbb{R}:t\geq 0\}$ (the \textit{semi-group} axiom); and
    \item $\varphi$ is continuous (the \textit{continuity} axiom).
\end{enumerate}
For instance, $\varphi$ might correspond to a differential equation $\dot{x}=f(x)$ on $X$. We will usually write $\varphi^t$ instead of $\varphi(\cdot,t)$. If we could work with $\mathbb{R}$ instead of $\mathbb{R}_{\geq 0}$, we would speak of a flow and $\varphi^t$ would be a homeomorphism.

\subsection{Stability}
Given a metric space $(X,d)$ and some semi-dynamical system $(X,\mathbb{R}_{\geq 0},\varphi)$ on this space, we will be concerned with stability of a closed subset $A\subseteq X$ under this system.   

Specifically, we are concerned with the following stability notions (always understood to be with respect to a system $(X,\mathbb{R}_{\geq 0},\varphi)$). First, the set $A$ is said to be \textbf{\textit{uniformly stable}} when for each $\varepsilon>0$ there is a $\delta(\varepsilon)>0$ such that
 $\{\varphi^t(x)\,|\, x\in N_{\delta(\varepsilon)}(A;d),\,t\geq 0 \}\subseteq N_{\varepsilon}(A;d)$. 

Second, $A$ is said to be a \textit{\textbf{uniform attractor}} if there is some $\alpha>0$, and for any $\varepsilon>0$ a $T(\varepsilon)>0$ such that  $\{\varphi^t(x)\,|\, x\in N_{\alpha}(A;d),\,t\geq T(\varepsilon) \}\subseteq N_{\varepsilon}(A;d)$. 

Then, a closed set $A\subseteq X$ is said to be \textit{\textbf{uniformly asymptotically stable}} when it is both uniformly stable and a uniform attractor. We denote the corresponding basin of attraction by $B_u(A)$. We emphasize that these notions are \textit{uniform} in the sense that we work with neighbourhoods of $A$, not neighbourhoods of points. We also emphasize that these stability notions are not purely topological, they rely on the interplay between the metric $d$ and the set $A$, that is, $N_{\varepsilon}(A;d),N_{\delta(\varepsilon)}(A;d)$ and $N_{\alpha}(A;d)$ are all elements from the neighbourhood filter $\mathscr{F}_d$, we are not selecting any element from $\mathscr{F}_{\tau}\setminus \mathscr{F}_d$. 

We remark that early work defined (Lyapunov) stability exclusively through metrics, in fact, through norms \textit{e.g.}, see \cite{ref:massera1949liapounoff,ref:Massera1956,ref:lefschetzlasalle1961,hahn1967stability}. Early generalizations for closed attractors, appear most notably in the work by Zubov \cite{ref:zubov1964methods}, see also \cite{ref:roxin1965stability,hahn1967stability}. Comments on the purely topological viewpoint can be found in \cite{ref:bhatiahajek2006local,ref:bhatia1970stability}, often under compactness assumptions.

Now, suppose that $A$ is a closed subset of a locally compact, separable, metric space $(X,d)$. If $A$ is uniformly asymptotically stable under some semi-dynamical systems $(X,\mathbb{R}_{\geq 0},\varphi)$, a converse Lyapunov theorem exists \cite[Thm. V.4.25]{ref:bhatia1970stability}. Note, although \cite{ref:bhatia1970stability} is only concerned with flows, the proof of their theorem works for semiflows as well. In particular, $A$ is not assumed to be invariant.

\begin{remark}[On topological stability]
\upshape{
One can readily define a purely topological notion of stability, replacing all metric neighbourhoods with arbitary neighbourhoods. However, as pointed out in earlier work by Auslander and Bhatia \cite{ref:Bhatiasaddle1970,ref:auslander1977filter}, one is less likely to be able to assert stability using a \textit{single continuous} Lyapunov function $V:X\to \mathbb{R}_{\geq 0}$. The reason being, the topology of $X$, or the structure of $\mathscr{F}_{\tau}$ for that matter, might be ``\textit{too complicated}'' to capture using $V$ (note that $\mathbb{R}_{\geq 0}$ is second countable while $X$ might even fail to be metrizable, in general). A metric space, on the other hand, is first countable and $x\mapsto d(x,A)$ allows for getting a grip on $\mathscr{F}_d$ using a single continuous function. To construct examples where a single $V$ indeed fails to exist, one should consider a non-metrizable space, as shown in \cite[p. 78-79]{ref:homotopietheorie} and \cite[Ex. III.9]{ref:jongeneelECC24}. We note that to properly interpret those results, one must consider so-called starting points \cite{ref:bhatiahajek2006local}.  
\exampleEnd
}
\end{remark} 

\begin{remark}[Metric and topological stability coincide when $A$ is compact]
\label{rem:metric:topo}
\upshape{
Despite the difficulties alluded to above, recall from our discussion on filters and in particular Lemma~\ref{lem:eps:nbhd:U} that when $A$ is compact, not merely closed, we can replace the metric neighbourhoods with standard open neighbourhoods, so that metric- and topological stability coincide. This is true, both for stability and attraction.
\exampleEnd
} 
\end{remark}

\begin{remark}[Stability is uniform when $A$ is compact]
\label{rem:unif:gas}
\upshape{
By Remark~\ref{rem:metric:topo}, we may focus on the metric definition of stability when $A$ is compact. In that case, stability and uniform stability are equivalent by \cite[Prop. V.4.2]{ref:bhatia1970stability} (see \cite[Rem. V.4.3]{ref:bhatia1970stability}, local compactness is not needed). Now, under the assumption that $(X,d)$ \textit{is} locally compact, one can show that the definition of uniform attraction through prolongations agrees with the topological definition \cite[Prop. V.1.2]{ref:bhatia1970stability}. Additionally, one can show that for compact attractors, asymptotic stability and uniform asymptotic stability are equivalent \cite[Thm. 1.16]{ref:bhatia1970stability}.
\exampleEnd
}
\end{remark}

Akin to Example~\ref{ex:open:nbhd:closed:1}, it is known that the uniform adjective does not come for free when $A$ is merely closed. 

\begin{example}[Non-uniform asymptotic stability]
\upshape{
Consider the following ODE on $\mathbb{R}^2$:
\begin{equation}
\label{equ:non:UGAS:ode}
\begin{pmatrix}
\dot{x} \\ \dot{y}
\end{pmatrix} = 
\begin{pmatrix}
0 \\ -y/ (1+x^2)
\end{pmatrix}.
\end{equation}
It readily follows that the flow corresponding to~\eqref{equ:non:UGAS:ode} becomes $\varphi:\mathbb{R}^2\times \mathbb{R}\to \mathbb{R}^2$ defined through $\varphi^t( x, y ):=(x,e^{-a(x)t}y)$ with $\mathbb{R}\ni x \mapsto a(x) := 1/(1+x^2)$. Pick any $(x,y)\in \mathbb{R}^2$, then $\lim_{t\to +\infty}\varphi^t(x,y)\in A:=\{ (x,y)\in \mathbb{R}^2\,|\, y = 0\}$. In fact, $A$ is even uniformly stable. However, as $\lim_{|x|\to +\infty}a(x)=0$, the attraction is not uniform, that is, the $T(\varepsilon)>0$ from the definition is a function of $x$, without uniform bound.
\exampleEnd
}
\end{example}

In what follows, our running example will be a closed uniformly asymptotically stable attractor, hence we focus on this setting.

At last, given the stability notions we consider, we recall what can happen when we would allow for attractors that are not closed. For instance, in that case $A:=\mathbb{Q}$ is an attractor, with $B_u(A)=\mathbb{R}$ under the constant flow $(x,t)\mapsto \varphi^t(x):=x$ and the standard metric on $\mathbb{R}$. Clearly, these kind of pathologies should be excluded.

\section{A theorem by Wilson and its correction}
\label{sec:correct:Wilson}

The paper ``\textit{The Structure of the Level Surfaces of a Lyapunov Function}'' by Wilson \cite{ref:wilson1967structure} is rightfully celebrated, yet, it contains a flaw we must highlight here. In that work, the author claims that if $A$ is any uniformly asymptotically stable submanifold of some smooth manifold, under some smooth vector field, then the basin of attraction $B(A)$ is diffeomorphic to an open tubular neighbourhood of $A$ \cite[Thm. 3.4]{ref:wilson1967structure}. Note, a metric is used to define stability \cite[p. 327]{ref:wilson1967structure}, interestingly, to resolve precisely complications when $A$ is not compact.  

Lin, Yao and Cao recently corrected this theorem by Wilson, as the result is not true for \textit{any} submanifold. Compactness is key in their resolution. In particular, they prove the following. 

\begin{theorem}[{\cite[Thm. 1]{ref:lin2022wilson}} (Corrected version of {\cite[Thm. 3.4]{ref:wilson1967structure}})]
\label{thm:Wilson:correction}
The domain of attraction of a compact asymptotically stable submanifold $A$ in a finite-dimensional smooth manifold $X$ of an autonomous system \emph{[a smooth flow]} is diffeomorphic to the tubular neighborhood of $A$.
\end{theorem}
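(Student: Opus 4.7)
The plan is to rerun Wilson's original argument while using the compactness of $A$ precisely where his version breaks down: to produce both a tubular neighborhood of $A$ and a smooth Lyapunov function whose small sublevel sets are trapped inside that tubular neighborhood. Concretely, first invoke a smooth converse Lyapunov theorem, which is valid in the compact case (this is essentially the construction of \cite[Thm.~3.1]{ref:wilson1967structure} itself), to obtain a smooth proper function $V:B(A)\to [0,\infty)$ with $V^{-1}(0)=A$, $dV\neq 0$ off $A$, and $\tfrac{d}{dt}V(\varphi^t(x))|_{t=0}<0$ for all $x\in B(A)\setminus A$. Next, since $A$ is a compact smooth submanifold of the finite-dimensional manifold $X$, the tubular neighborhood theorem gives a diffeomorphism $\Psi:\nu(A)\to T$ from the normal bundle of $A$ onto an open neighborhood $T\subseteq B(A)$ of $A$. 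Using compactness once more, choose a regular value $c>0$ of $V$ (generic by Sard) small enough that $V^{-1}([0,c])\subseteq T$; this is possible precisely because, for compact $A$, the sublevel sets $V^{-1}([0,c])$ shrink uniformly down to $A$ as $c\to 0^+$ by the same mechanism as Lemma~\ref{lem:eps:nbhd:U}.

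I would then use the flow to compress $B(A)$ onto $V^{-1}([0,c))$. Fix a smooth strictly increasing bijection $g:[0,\infty)\to [0,c)$ with $g(0)=0$, $g'(0)=1$, and $g(s)\leq s$; a concrete choice is $g(s)=cs/(s+c)$. Strict monotonicity of $V$ along orbits together with $V(\varphi^t(x))\to 0$ for $x\in B(A)$ yields, for each such $x\notin A$, a unique time $\tau(x)\geq 0$ with $V(\varphi^{\tau(x)}(x))=g(V(x))$; smoothness of $\tau$ follows from the implicit function theorem applied to the nowhere-vanishing $\dot V$. Setting $\Phi(x):=\varphi^{\tau(x)}(x)$ on $B(A)\setminus A$ and $\Phi|_A=\mathrm{id}_A$ gives a smooth diffeomorphism $B(A)\to V^{-1}([0,c))$ with backward-flow inverse. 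To finish, transport via $\Psi$: the set $W:=\Psi^{-1}(V^{-1}([0,c)))$ is an open neighborhood of the zero section of $\nu(A)$, and compactness of $A$ supplies constants $0<\varepsilon_1\leq \varepsilon_2$ with $D_{\varepsilon_1}(\nu(A))\subseteq W\subseteq D_{\varepsilon_2}(\nu(A))$ in any fixed bundle metric. A fibrewise radial isotopy along $\nu(A)$ then yields a diffeomorphism $W\cong \nu(A)$, and composing produces $B(A)\cong V^{-1}([0,c))\cong W\cong \nu(A)\cong T$.

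The main obstacle is this final fibrewise identification. Although $W$ is an open neighborhood of the zero section, it need not be fibrewise star-shaped, so naive radial stretching is unavailable; what is needed is a smooth isotopy of $\nu(A)$ supported away from $A$ that carries a fixed disk bundle $D_{\varepsilon_1}(\nu(A))$ onto $W$. The ingredient that makes this possible is precisely that, for compact $A$, the fibrewise radius $a\mapsto \sup\{r:D_r(\nu_a(A))\subseteq W\}$ is bounded below by a positive constant, so the desired isotopy can be constructed smoothly and uniformly along $A$ (for instance by integrating a compactly supported fibrewise vector field produced from a finite partition of unity on $A$). In the noncompact case this lower bound can degenerate to zero---sublevel sets of $V$ can pinch arbitrarily close to $A$, in the spirit of Example~\ref{ex:open:nbhd:closed:1}---and this is exactly the gap that Wilson overlooked and that Lin--Yao--Cao close.
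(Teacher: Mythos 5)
First, a point of comparison: the paper does not prove this statement at all --- Theorem~\ref{thm:Wilson:correction} is quoted from Lin--Yao--Cao \cite{ref:lin2022wilson} and used as background, so your proposal has to stand on its own. Its first half is sound and is essentially Wilson's own scheme: a smooth, proper converse Lyapunov function $V$ with no critical points off $A$ exists in the compact case, every $c>0$ is automatically a regular value (Sard is not needed), the mechanism of Lemma~\ref{lem:eps:nbhd:U} puts $V^{-1}([0,c])$ inside a tubular neighbourhood $T$ for small $c$, and the flow reparametrization $\Phi(x)=\varphi^{\tau(x)}(x)$ gives $B(A)\cong V^{-1}([0,c))$. (Even here you owe a verification that $\tau$, and hence $\Phi$ and its inverse, are smooth \emph{across} $A$, not just on $B(A)\setminus A$; this requires estimating $\tau$ and its derivatives as $x\to A$, where $\dot V\to 0$.)

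The genuine gap is the final step, precisely the one you flag as ``the main obstacle,'' and your proposed repair does not close it. The positive lower bound on the fibrewise inradius supplied by compactness is not the issue. The fibre $W_a=W\cap\nu_a(A)$ is a sublevel set of the \emph{restriction} of $V\circ\Psi$ to the normal fibre, and a restriction of $V$ to a fibre can have critical points even though $V$ has none on $B(A)\setminus A$; consequently $W_a$ need not be star-shaped, need not be diffeomorphic to a ball, and need not even be connected (the set $V^{-1}([0,c))$ can bulge around and re-enter a given normal fibre away from the central disk). No fibrewise radial isotopy or partition-of-unity-glued fibrewise vector field can carry a round disk onto such a set, and the sandwich $D_{\varepsilon_1}(\nu(A))\subseteq W\subseteq D_{\varepsilon_2}(\nu(A))$ by itself does not imply $W\cong\nu(A)$. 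This is exactly where the real content of \cite{ref:lin2022wilson} lies: the identification must be made globally rather than fibrewise, for instance by showing that the compact region between the level set $V^{-1}(c)$ and the sphere bundle $\partial D_{\varepsilon_1}$ is a product cobordism, or by iterating the time-$T$ map to produce a nested cofinal family of compact neighbourhoods alternately diffeomorphic to $V^{-1}([0,c])$ and to closed disk bundles and then passing to a direct limit (a Brown/Mazur-type monotone-union argument). Your closing remark correctly locates \emph{why} compactness matters and why the argument collapses in Example~\ref{ex:counterS1}, but the proof as written does not yet supply the global step it is standing in for.
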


Then, to conclude on a flaw in Wilson's arguments, the authors provide a counterexample, showing that there a is closed but non-compact attractor such that Wilson's version of Theorem~\ref{thm:Wilson:correction} fails. 

Their counterexample will be one of our running examples. 

\begin{example}[{\cite[Ex. 22]{ref:lin2022wilson}} continued]
\label{ex:counterS1}
\upshape{
Let the metric space $(X,d)$ be given by $X:=\mathbb{R}^2\setminus \{(1,0)\}$ and $d(x,y):=\|x-y\|_2$. Let $\mathbb{S}^1 \hookrightarrow \mathbb{R}^2$ be the embedded unit circle and consider $A:=X\cap \mathbb{S}^1$. Hence, $A$ is closed. However, the sequence $\{\cos(1/k),\sin(1/k)\}_{k \geq 1}$ has no convergent subsequence in $A$, and hence $A$ is not compact (by the equivalence of compactness and sequential compactness on metric spaces). Now consider the negative gradient flow $\dot{x}=-\nabla f(x)$, for $X\ni x\mapsto f(x):=d(x,A)^2$, on $X$. It follows that $A$ is uniformly asymptotically stable with $B_u(A)=X\setminus \{(0,0)\}$. However, a tubular neighbourhood of $A$ is diffeomorphic to $A\times \mathbb{R}$ (which is a contractible set) and not to $B_u(A)$ (which is not a contractible set), see Figure~\ref{fig:S1closed} $(i)$, \textit{i.e.}, $A\, {\not\simeq_h}\, B_u(A)$. 

Without going into the details, $A$ and $B_u(A)$ are also not \textit{shape} equivalent, \textit{e.g.}, one may compute the $q$-th \u{C}ech-Alexander-Spanier cohomology group of $A$ and $B_u(A)$, denoted $\breve{H}^q(\cdot)$, via \cite[Prop. XIV.6.3]{ref:massey1991basicAT} ($B_u(A)$ has the homotopy type of the CW complex $\mathbb{S}^1 \vee \mathbb{S}^1$) and see, for instance, that
\begin{align*}
\breve{H}^1(A;\mathbb{Z})\cong H^1(A;\mathbb{Z}) \not \cong H^1(B_u(A);\mathbb{Z}) \cong \breve{H}^1(B_u(A);\mathbb{Z}). 
\end{align*}
For more details, consult \cite[Sec. XIV.6]{ref:massey1991basicAT}, \cite{ref:kapitanski2000shape} or \cite{ref:gobbino2001topological}.   
\exampleEnd 
}
\end{example}

To further illuminate the problem, recall Example~\ref{ex:open:nbhd:closed:1}. Now we elaborate on Example~\ref{ex:counterS1}.

    \begin{figure}
        \centering
        \includegraphics[scale=1]{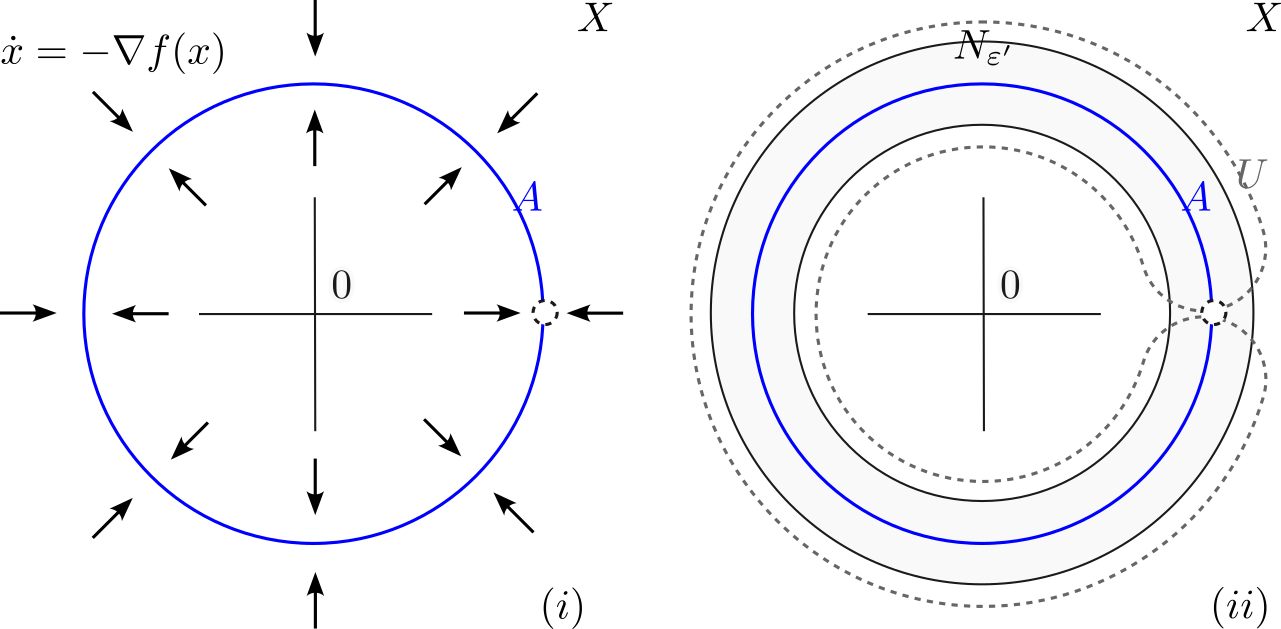}
        \caption{Example~\ref{ex:counterS1} and Example~\ref{ex:S1closed}. The set $A$ is a uniformly asymptotically stable attractor, yet, $A$ is not a strong deformation retract of $B_u(A)$. A (partial) reason being that $\mathscr{F}_{\tau}\not \leq \mathscr{F}_d$, \textit{e.g.}, $\not\exists V\in \mathscr{F}_d:V\subseteq U$.}
        \label{fig:S1closed}
    \end{figure}

\begin{example}[Example~\ref{ex:counterS1} continued: open neighbourhoods of closed subsets 2]
\label{ex:S1closed}
\upshape{
To compare with Example~\ref{ex:open:nbhd:closed:1}, now, the point $(1,0)$ takes up the role of ``$\infty$''. A choice of open neighbourhood $U$ of $A$ is as follows. Pick some $\varepsilon \in (0,1/2)$ and let $\mathbb{S}_{\varepsilon}^1:=\{(\cos(\theta),\sin(\theta))\in X\,|\, \theta\in[ \mathrm{asin}(\varepsilon),2\pi-\mathrm{asin}(\varepsilon)]\}$ and set $U:=N_{\varepsilon}(\mathbb{S}^1_{\varepsilon};d)$. It follows that there is no $\varepsilon'>0$ such that the open annulus $N_{\varepsilon'}(A;d)=\{ (r\cos(\theta),r\sin(\theta))\in X\,|\, r\in (1-\varepsilon',1+\varepsilon')),\, \theta\in [0,2\pi)\}$ is contained in $U$. See Figure~\ref{fig:S1closed} $(ii)$.         
\exampleEnd
} 
\end{example}

Despite the appeal of Example~\ref{ex:S1closed}, the existence of such a neighbourhood cannot be the \textit{sole} obstruction to $A \, {\not \simeq_h} \, B_u(A)$. Indeed, although Example~\ref{ex:open:nbhd:closed:1} provided us with a similar neighbourhood, for the dynamical system as shown in Figure~\ref{fig:xaxisclosed} $(i)$ we clearly have there that $\mathbb{R}^2=X=B_u(A)$ (strongly) deformation retracts onto $A$. In the next section we show that filters and cofibrations are the right tool to capture a topological mismatch as in Example~\ref{ex:counterS1}.

Now, one might expect that we can always simply restrict $B_u(A)$ to some open (forward invariant) subset $B'\subset B_u(A)$ such that $A\simeq_h B'$. By means of the example in Figure~\ref{fig:closedexamples} $(iii)$ one should observe that this is false in general. We also mention that completeness of the underlying space does not resolve the situation, consider the following examples: \cite[\S 4.2]{ref:lin2022wilson}, where a pinched cylinder functions as the domain of attraction; and \cite[Ex. III.8]{ref:jongeneelECC24}, where a Warsaw circle has an annular domain of attraction.

\section{Closed attractors}
\label{sec:main}
In previous work, we captured that when $A$ is a compact asymptotically stable attractor, $A$ is a strong deformation retract of $B(A)$ if and only if the inclusion map $\iota_A:A\hookrightarrow B(A)$ is a cofibration~\cite{ref:jongeneelECC24}. Interestingly, the attractor $A$ in Example~\ref{ex:counterS1} is a cofibration despite $A\,{\not\simeq_h}\, B_u(A)$, thus, the notion of a cofibration is not strong enough for our purposes, that is, for closed attractors that are possibly not compact. We show below, however, that $\mathscr{F}_d$-cofibrations are precisely the right tool to capture this.   

\subsection{Main results}
\begin{lemma}[A is a $\mathscr{F}_d$-weak deformation retract of $B_u(A)$]
\label{lem:Fd:weak:def:retract}
Let $A$ be a closed subset of a separable, locally compact metric space $(X,d)$. If $A$ is uniformly asymptotically stable, with $B_u(A)$ containing $N_{\varepsilon}(A;d)$ for some $\varepsilon>0$, then $A$ is a $\mathscr{F}_d$-weak deformation retract of $B_u(A)$. 
\end{lemma}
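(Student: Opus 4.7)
To establish that $A$ is a $\mathscr{F}_d$-weak deformation retract of $B_u(A)$ in the sense of Definition~\ref{def:F:weak:def:retract}, I would show that every $U\in\mathscr{F}_d$ contains a strong deformation retract of $B_u(A)$ which itself contains $A$. Fix such a $U$ and pick $\varepsilon'\in(0,\varepsilon]$ with $N_{\varepsilon'}(A;d)\subseteq U$. The plan is to realize the desired retract as a forward-invariant sublevel set of a converse Lyapunov function and to contract $B_u(A)$ onto it along the semi-flow $\varphi$.

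First, I would invoke the converse Lyapunov theorem~\cite[Thm.~V.4.25]{ref:bhatia1970stability}, which applies because $(X,d)$ is separable and locally compact and $A$ is uniformly asymptotically stable; this produces a continuous $L:B_u(A)\to\mathbb{R}_{\geq 0}$ with $L^{-1}(0)=A$, with $t\mapsto L(\varphi^t(x))$ strictly decreasing for every $x\in B_u(A)\setminus A$, and whose small sublevel sets lie in small metric neighborhoods of $A$ (this ``\textit{uniformity}'' of $L$ is what the theorem extracts from uniform asymptotic stability). I would then choose $c>0$ so that $V_c:=L^{-1}([0,c])\subseteq N_{\varepsilon'}(A;d)$; then $V_c$ is closed in $B_u(A)$, forward-invariant, and contains $A$.

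Second, I would define the hitting time $\tau:B_u(A)\to\mathbb{R}_{\geq 0}$ by $\tau(x):=\inf\{t\geq 0:\varphi^t(x)\in V_c\}$. It is finite because every orbit starting in $B_u(A)$ converges to $A$, identically zero on $V_c$, and continuous: upper semicontinuity at $x\notin V_c$ follows from the estimate $L(\varphi^{\tau(x)+\eta}(x))<c$ (which holds for every $\eta>0$ by strict decrease) together with continuity of $L\circ\varphi$, while lower semicontinuity follows from closedness of $V_c$; the boundary case $x\in\partial V_c$ is handled analogously via $L(\varphi^{\eta}(x))<c$ for all $\eta>0$. The map $H(x,s):=\varphi^{s\tau(x)}(x)$ is then a continuous homotopy from $\mathrm{id}_{B_u(A)}$ to a retraction onto $V_c$, stationary on $V_c$, so $V_c$ is a strong deformation retract of $B_u(A)$ sandwiched between $A$ and $U$.

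The main obstacle is the first step: guaranteeing that small sublevel sets of $L$ are contained in prescribed metric neighborhoods of $A$. For compact attractors this is automatic via Lemma~\ref{lem:eps:nbhd:U}, but for a non-compact closed attractor it relies on the uniform form of the Lyapunov function furnished by~\cite[Thm.~V.4.25]{ref:bhatia1970stability}---essentially, uniform stability forces a class-$\mathcal{K}$ lower bound of the form $L(x)\geq a(d(x,A))$ that makes $V_c$ metric-bounded. The remaining verifications (continuity of $\tau$ at $\partial V_c$ and of $H$ globally, together with $H(B_u(A)\times[0,1])\subseteq B_u(A)$ from forward invariance) are standard once this uniformity is in place.
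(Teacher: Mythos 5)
Your proposal is correct and follows essentially the same route as the paper's proof: invoke the converse Lyapunov theorem of Bhatia--Szegő to get a continuous $V$ with a class-$\mathcal{K}$ lower bound $\alpha(d(x,A))\leq V(x)$, use that bound to trap a sublevel set $V^{-1}([0,c])$ inside the prescribed metric neighbourhood, and deformation-retract $B_u(A)$ onto it along the flow via the hitting time $T_c$. The paper handles continuity of the hitting time by citing the argument of Kapitanski--Rodnianski rather than spelling it out, but this is the same construction.
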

\begin{proof}
Under the standing assumptions on $A$, there is a continuous Lyapunov function $V:B_u(A)\to \mathbb{R}_{\geq 0}$, with $V(x)=0 \iff x\in A$ and $V(\varphi^t(x))<V(x)$ on $B_u(A)\setminus A$, plus, there is a $\alpha \in \mathcal{K}_{\infty}$ such that $\alpha(d(x,A))\leq V(x)$ for all $x\in B_u(A)$ \cite[Thm. V.4.25]{ref:bhatia1970stability}. Additionally, there is some $\varepsilon>0$ such that $N_{\varepsilon}(A;d)\subseteq B_u(A)$. 

Now, fix $c:=\alpha(\varepsilon/2)>0$ and define $T_c:B_u(A)\to \mathbb{R}_{\geq 0}$ through $T_c(x):=\inf\{t\geq 0:\varphi^t(x)\in V^{-1}([0,c])\}$. As $A$ is a uniform attractor, $T_c(x)<+\infty$ for all $x\in B_u(A)$. Also, for any $x\in B_u(A)\setminus V^{-1}([0,c])$, there is a $\delta>0$ such that $0<\delta<T_c(x)$ as $V^{-1}([0,c])\subset N_{\varepsilon}(A;d)\subseteq B_u(A)$. Then $T_c$ is continuous by the same line of arguments as in \cite[Thm. 3.6]{ref:kapitanski2000shape}. 

As $B_u(A)$ is open, there is a $U\in \mathscr{F}_d$ contained in $B_u(A)$, \textit{i.e.}, at least $B_u(A)$ itself. We may assume that $N_{\varepsilon}(A;d)\subseteq U$ (otherwise rescale $\varepsilon$) and thus $V^{-1}([0,c])\subset U$.   

Hence, using the homotopy $B_u(A)\times [0,1]\ni (x,s)\mapsto H(x,s):=\varphi^{s\cdot T_c(x)}(x)$ we have established that any $U\in \mathscr{F}_d$ within $B_u(A)$ contains a strong deformation retract of $B_u(A)$ (namely, $V^{-1}([0,c])$ for an appropriate choice of $c>0$).   
\end{proof}

Note that we cannot strengthen Lemma~\ref{lem:Fd:weak:def:retract} to $\mathscr{F}_{\tau}$-weak deformation retracts, in general. This is impossible by, for instance, Example~\ref{ex:counterS1} and Example~\ref{ex:S1closed}. To be precise, in those examples $X$ is separable and locally compact, also, $B_u(A)$ contains $N_{1/2}(A;d)$, while $A$ is not a $\mathscr{F}_{\tau}$-weak deformation retract, as precisely the neighbourhood $U\in \mathscr{F}_{\tau}$, as in Figure~\ref{fig:S1closed}, contains no strong deformation retract of $B_u(A)$. Note, $U\notin \mathscr{F}_{d}$.  

\begin{lemma}[$\Leftarrow$~$\mathscr{F}_d$-cofibration]
\label{lem:cofib1}
Let $A$ be a closed subset of a separable, locally compact metric space $(X,d)$. Suppose that $A$ is uniformly asymptotically stable, with $B_u(A)$ containing $N_{\varepsilon}(A;d)$ for some $\varepsilon>0$. If $(i)$ $A$ is a strong $\mathscr{F}_d$-neighbourhood deformation retract of $B_u(A)$ or $(ii)$ $\iota_A:A\hookrightarrow B_u(A)$ is a $\mathscr{F}_d$-cofibration, then, $A$ is a strong deformation retract of $B_u(A)$.
\end{lemma}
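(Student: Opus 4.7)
The approach is to instantiate the filter dichotomy from Lemma~\ref{lem:strong:def:retract:filter}(i): combine the $\mathscr{F}_d$-weak deformation retract property that the dynamics supplies with the $\mathscr{F}_d$-neighbourhood structure that the hypotheses supply, obtaining a global strong deformation retract of $B_u(A)$ onto $A$. The first input is Lemma~\ref{lem:Fd:weak:def:retract}: for every $U\in \mathscr{F}_d$ contained in $B_u(A)$ one finds a Lyapunov sublevel set $V\subseteq U$ with $A\subseteq V$ that is a strong deformation retract of $B_u(A)$, the corresponding deformation $K:B_u(A)\times [0,1]\to B_u(A)$ being built from the flow $\varphi$ and a continuous hitting time.

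The second input is a pair $(U,H)$ with $U\in \mathscr{F}_d$ and $H:U\times [0,1]\to B_u(A)$ satisfying $H(\cdot,0)=\mathrm{id}_U$, $H(U\times \{1\})\subseteq A$, and $H(a,s)=a$ for $a\in A$. Under hypothesis (i) this is simply the definition of strong $\mathscr{F}_d$-neighbourhood deformation retract (with $H$ even valued in $U$). Under hypothesis (ii), Theorem~\ref{thm:F:cofib:retract:ndr} converts the $\mathscr{F}_d$-cofibration into the $\mathscr{F}_d$-NDR structure of Definition~\ref{def:F:NDR}, yielding exactly such a pair, except that $H$ is only required to land in $B_u(A)$.

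The two halves are then concatenated through the set $V\subseteq U$. Define $\widetilde{H}:B_u(A)\times [0,1]\to B_u(A)$ by $\widetilde{H}(x,s):=K(x,2s)$ for $s\in [0,1/2]$ and $\widetilde{H}(x,s):=H(K(x,1),2s-1)$ for $s\in [1/2,1]$. Since $K(x,1)\in V\subseteq U$, the second leg is well-defined; continuity at $s=1/2$ is automatic because $K(x,1)=H(K(x,1),0)$; and since $A\subseteq V$ while both $K$ and $H$ fix $A$ pointwise, $\widetilde{H}$ is stationary on $A$. Thus $\widetilde{H}$ realises $A$ as a strong deformation retract of $B_u(A)$.

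The main obstacle I anticipate is keeping cases (i) and (ii) symmetric: in (ii) the $\mathscr{F}_d$-NDR homotopy is only required to take values in $B_u(A)$, not in $U$, so it is \emph{a priori} weaker than a standard strong neighbourhood deformation retract. The concatenation above is designed to accommodate this, as $\widetilde{H}$ has codomain $B_u(A)$ throughout and no stage ever needs $H$ to remain inside $U$. A minor sanity check is that the hypothesis $N_{\varepsilon}(A;d)\subseteq B_u(A)$ is used precisely to invoke Lemma~\ref{lem:Fd:weak:def:retract}, which in turn relies on the converse Lyapunov theorem.
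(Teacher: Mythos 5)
Your proof is correct and follows essentially the same route as the paper: invoke Lemma~\ref{lem:Fd:weak:def:retract} to get the $\mathscr{F}_d$-weak deformation retract, extract the $\mathscr{F}_d$-NDR deformation via Theorem~\ref{thm:F:cofib:retract:ndr} in case (ii), and concatenate the two homotopies through the intermediate strong deformation retract $V\subseteq U$, exactly as in the paper's $\widetilde{H}$. The only cosmetic difference is that the paper dispatches case (i) by citing Lemma~\ref{lem:strong:def:retract:filter} (or ``composition'') directly, whereas you run both cases through the same concatenation.
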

\begin{proof}
We know from Lemma~\ref{lem:Fd:weak:def:retract} that $A$ is a $\mathscr{F}_d$-weak deformation retract of $B_u(A)$. Then, $(i)$ follows from Lemma~\ref{lem:strong:def:retract:filter} or directly by composition.
    
Regarding $(ii)$ we follow \cite[Lem. III.4]{ref:jongeneelECC24}. As $\iota_A:A\hookrightarrow B_u(A)$ is a $\mathscr{F}_d$-cofibration, we know that $(B_u(A),A)$ is a $\mathscr{F}_d$-NDR pair by Theorem~\ref{thm:F:cofib:retract:ndr}. Now, we cannot directly conclude that $U$, as in Definition~\ref{def:F:NDR}, deformation retracts onto $A$, but we do have a \textit{deformation} $H:U\times [0,1]\to B_u(A)$, with $U\in \mathscr{F}_d$, that is, with $H$ as in Definition~\ref{def:F:NDR}. Then, as $A$ is a $\mathscr{F}_d$-weak deformation retract of $B_u(A)$ we know that $U$ contains a set $V\supseteq A$ such that $B_u(A)$ strongly deformation retracts onto $V$, that is, there is map $H_w:B_u(A) \times [0,1]\to B_u(A)$ such that $H_w(x,0)=x$ $\forall x\in B_u(A)$, $H_w(x,1)\in V$ $\forall x \in B_u(A)$ and $H_w(x,s)=x$ $\forall (x,s)\in V\times [0,1]$. Hence, the continuous map $\widetilde{H}:B_u(A)\times [0,1]\to B_u(A)$ defined through 
\begin{equation*}
    \widetilde{H}(x,s) := \begin{cases}
H_w(x,2s) \quad & s\in [0,1/2]\\
{H}\left(H_w(x,1),2s-1\right) \quad & s\in (1/2,1]
    \end{cases}
\end{equation*}
provides for the strong deformation retract of $B_u(A)$ onto $A$.  
\end{proof}

\begin{lemma}[$\Rightarrow$~$\mathscr{F}_d$-cofibration]
\label{lem:cofib2}
Let $A$ be a closed subset of a separable, locally compact metric space $(X,d)$. Suppose that $A$ is uniformly asymptotically stable, with $B_u(A)$ containing $N_{\varepsilon}(A;d)$ for some $\varepsilon>0$. If $A$ is a strong deformation retract of $B_u(A)$, then, $(i)$ $A$ is a strong $\mathscr{F}_d$-neighbourhood deformation retract of $B_u(A)$ and $(ii)$ $\iota_A:A\hookrightarrow B_u(A)$ is a $\mathscr{F}_d$-cofibration. 
\end{lemma}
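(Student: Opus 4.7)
Let $H^{\ast} : B_u(A) \times [0,1] \to B_u(A)$ denote the given strong deformation retract, so $H^{\ast}(\cdot,0) = \mathrm{id}_{B_u(A)}$, $H^{\ast}(x,1) \in A$ for every $x \in B_u(A)$, and $H^{\ast}(a,s) = a$ for every $(a,s) \in A \times [0,1]$. The plan is to derive both conclusions directly from $H^{\ast}$, by exploiting the fact that the hypothesis $B_u(A) \supseteq N_{\varepsilon}(A;d)$ places $B_u(A)$ itself into the filter $\mathscr{F}_d$ via the superset axiom (ii) of a filter.

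For (i), I would simply take $W := B_u(A) \in \mathscr{F}_d$ in the strong version of Definition~\ref{def:F:neighbourhood:retract}. The map $H^{\ast}$ itself then witnesses that $W$ strongly deformation retracts onto $A$, and the conclusion is immediate.

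For (ii), I plan to invoke Theorem~\ref{thm:F:cofib:retract:ndr} and exhibit $(B_u(A), A)$ as an $\mathscr{F}_d$-NDR pair. The natural Urysohn-type witness is
\begin{equation*}
u(x) \;:=\; \min\!\bigl\{\,1,\ \varepsilon^{-1}\, d(x, A)\,\bigr\}, \qquad x \in B_u(A),
\end{equation*}
which is continuous (since $d(\cdot, A)$ is $1$-Lipschitz), satisfies $u^{-1}(0) = A$ (because $A$ is closed), and equals $1$ on $B_u(A) \setminus N_{\varepsilon}(A;d)$. With $U := N_{\varepsilon}(A;d) \in \mathscr{F}_d$, the restriction $H := H^{\ast}|_{U \times [0,1]}$ serves as the required deformation: the three conditions in Definition~\ref{def:F:NDR} (initial time, stationarity on $A$, and terminal value in $A$) all descend verbatim from the corresponding properties of $H^{\ast}$.

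I do not anticipate a genuine obstacle here; the nontrivial labour has been absorbed upstream into Theorem~\ref{thm:F:cofib:retract:ndr}, and no appeal to the Lyapunov-theoretic machinery of Lemma~\ref{lem:Fd:weak:def:retract} is needed for this direction. The one point worth flagging is that Definition~\ref{def:F:NDR} permits the deformation $H$ to take values in $X = B_u(A)$ rather than forcing it into $U$, so a bare restriction of $H^{\ast}$ to $U \times [0,1]$ suffices and we are spared from constructing a flow-based deformation that remains inside a metric tube around $A$.
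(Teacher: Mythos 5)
Your proposal is correct, and its overall architecture matches the paper's: item $(i)$ is dispatched by noting $B_u(A)\in\mathscr{F}_d$, and item $(ii)$ by exhibiting $(B_u(A),A)$ as an $\mathscr{F}_d$-NDR pair and invoking Theorem~\ref{thm:F:cofib:retract:ndr}. The one genuine divergence is in how the Urysohn-type map $u$ is built. The paper appeals to the converse Lyapunov theorem \cite[Thm. V.4.25]{ref:bhatia1970stability} to obtain a continuous $V$ with $V^{-1}(0)=A$ and $V(x)\leq\beta(d(x,A))$, and sets $u:=V/(1+V)$ with $U:=u^{-1}([0,1))$; you instead take $u(x):=\min\{1,\varepsilon^{-1}d(x,A)\}$ and $U:=N_\varepsilon(A;d)$, which satisfies all three clauses of Definition~\ref{def:F:NDR} exactly as you verify (and is precisely the normal form the paper itself records in Remark~\ref{rem:Fd:NDR}). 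Your route is the more elementary one: it uses only closedness of $A$, the hypothesis $N_\varepsilon(A;d)\subseteq B_u(A)$, and the given strong deformation retract, so for this direction the dynamical hypotheses (uniform asymptotic stability, separability, local compactness) are not actually exercised, whereas the paper's Lyapunov-based construction does lean on them. What the paper's version buys is mainly expository continuity with Lemma~\ref{lem:Fd:weak:def:retract} and the Lyapunov framework; mathematically your witness is sufficient and slightly sharper in isolating what the implication really needs. Your closing observation --- that Definition~\ref{def:F:NDR} only asks the deformation to land in $X=B_u(A)$, not in $U$, so the bare restriction $H^{\ast}|_{U\times[0,1]}$ suffices --- is the right point to flag and is handled correctly.
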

\begin{proof}
Item $(i)$ is trivial as we can take $B_u(A)\in \mathscr{F}_d$. 

Regarding $(ii)$, we appeal to Theorem~\ref{thm:F:cofib:retract:ndr}. As $A$ is a strong deformation retract of $B_u(A)$ by assumption, then, to conclude on $(B_u(A),A)$ being an $\mathscr{F}_d$-NDR pair, we need to construct the map $u:B_u(A)\to [0,1]$ from Definition~\ref{def:F:NDR}. As $A$ is uniformly asymptotically stable, we can appeal to the existence of a Lyapunov function. More precisely, under the standing assumptions on $A$, there is a continuous Lyapunov function $V:B_u(A)\to \mathbb{R}_{\geq 0}$, with $V(x)=0 \iff x\in A$ and $V(\varphi^t(x))<V(x)$ on $B_u(A)\setminus A$, plus, there is a $\beta \in \mathcal{K}_{\infty}$ such that $V(x)\leq \beta(d(x,A))$ for all $x\in B_u(A)$ \cite[Thm. V.4.25]{ref:bhatia1970stability}. Now, define $u$ through
    \begin{align*}
    B_u(A) \ni x \mapsto u(x) := \frac{V(x)}{1+V(x)}.
    \end{align*}
We already know that $B_u(A)$ contains some neighbourhood $N_{\varepsilon}(A;d)$, but to be more explicit see that $u(x)\leq \beta(d(x,A))$. Thus, for $\varepsilon' := \beta^{-1}(1)>0$ we have that $u^{-1}([0,1))\in \mathscr{F}_d$ as it contains $N_{\varepsilon'}(A;d)$.  
\end{proof}

\begin{theorem}[$\mathscr{F}_d$-cofibrations]
\label{thm:main:cofib}
Let $A$ be a closed subset of a separable, locally compact metric space $(X,d)$. Suppose that $A$ is uniformly asymptotically stable, with $B_u(A)$ containing $N_{\varepsilon}(A;d)$ for some $\varepsilon>0$. Then, $A$ is a strong deformation retract of $B_u(A)$ if and only if $(i)$ $A$ is a strong $\mathscr{F}_d$-neighbourhood deformation retract of $B_u(A)$ and $(ii)$ $\iota_A:A\hookrightarrow B_u(A)$ is a $\mathscr{F}_d$-cofibration.   
\end{theorem}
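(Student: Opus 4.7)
The plan is to observe that Theorem~\ref{thm:main:cofib} is the direct conjunction of Lemma~\ref{lem:cofib1} and Lemma~\ref{lem:cofib2}, both of which have been established under exactly the hypotheses appearing in the theorem statement (closed $A$, separable and locally compact $(X,d)$, uniform asymptotic stability, and $N_{\varepsilon}(A;d)\subseteq B_u(A)$ for some $\varepsilon>0$). Accordingly, the proof is a matter of citing these two lemmas in the correct order, with no new constructions required.

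For the ``$\Rightarrow$'' direction, I would invoke Lemma~\ref{lem:cofib2}: assuming $A$ is a strong deformation retract of $B_u(A)$, that lemma directly asserts both $(i)$ and $(ii)$. For the ``$\Leftarrow$'' direction, I would invoke Lemma~\ref{lem:cofib1}: either of conditions $(i)$ or $(ii)$ (and hence their conjunction) suffices to conclude that $A$ is a strong deformation retract of $B_u(A)$.

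I do not expect any real obstacle here, since the substantive work has already been carried out upstream. In particular, Lemma~\ref{lem:Fd:weak:def:retract} delivers the $\mathscr{F}_d$-weak deformation retraction by combining the converse Lyapunov theorem \cite[Thm.~V.4.25]{ref:bhatia1970stability} with the continuity of the first hitting time $T_c$; Theorem~\ref{thm:F:cofib:retract:ndr} converts between $\mathscr{F}_d$-cofibrations and $\mathscr{F}_d$-NDR pairs; and the gluing homotopy $\widetilde{H}$ in Lemma~\ref{lem:cofib1} fuses a neighbourhood deformation with the weak deformation onto a sublevel set into a single strong deformation retract onto $A$. A small remark worth including is that Lemma~\ref{lem:cofib1} in fact gives a stronger statement than strictly needed: conditions $(i)$ and $(ii)$ \emph{each individually} imply the strong deformation retract property, so the ``and'' in Theorem~\ref{thm:main:cofib} could be weakened to ``or''. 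I would note this briefly to clarify the logical structure, and then state the two-line proof consisting solely of the two lemma invocations.
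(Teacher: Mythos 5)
Your proposal is correct and coincides with the paper's own proof, which simply combines Lemma~\ref{lem:cofib1} and Lemma~\ref{lem:cofib2}. Your observation that Lemma~\ref{lem:cofib1} shows each of $(i)$ and $(ii)$ individually suffices for the ``$\Leftarrow$'' direction is accurate and consistent with the paper's later remark comparing the two conditions.
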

\begin{proof}
    Combine Lemma~\ref{lem:cofib1} and Lemma~\ref{lem:cofib2}. 
\end{proof}

\begin{example}[Retractions and convex spaces]
\label{ex:convex space}
\upshape{
The cofibration condition from Lemma~\ref{lem:cofib1} demands the existence of a \textit{deformation}, not necessarily a deformation retract. If $X\subseteq \mathbb{R}^n$ is convex, we have the following simple manifestation of an $\mathscr{F}_d$-NDR pair. Set $X\ni x \mapsto u(x):=\min(1,d(x,A))$, then if there is a retract $r:U\to A$, for $U:=N_{\varepsilon}(A)$ with $\varepsilon \in (0,1)$, we can define the homotopy $H:U\times [0,1]\to X$ through $H(x,s) := (1-s)x+sr(x)$. Note, in general, this is different from constructing a strong $\mathscr{F}_d$-\textit{neighbourhood} deformation retract as $H(U,[0,1])\subseteq U$ need not be true. As a constant map is a retract, see that if $A=\{\mathrm{pt}\}$ and $X=\mathbb{R}^n$, we directly recover \cite[Thm. 21]{ref:sontag2013mathematical}. 
\exampleEnd
}
\end{example}

\begin{example}[Example~\ref{ex:counterS1} continued: $A$ is not a strong deformation retract of $B_u(A)$]
\label{ex:counter:cofib}
\upshape{
We recall that in Example~\ref{ex:counterS1}, $A$ and $B_u(A)$ are not homotopy equivalent, despite the inclusion $\iota_A:A\hookrightarrow B_u(A)$ being a cofibration \textit{cf}. \cite{ref:jongeneelECC24} (\textit{e.g.}, using a tubular neighbourhood \cite[Ch. 6]{ref:Lee2}). Indeed, $A$ is \textit{not} compact. 

Now, we clarify \textit{via} Theorem~\ref{thm:main:cofib} why $A\,{\not\simeq_h}\,B_u(A)$. First, $A$ is not a strong $\mathscr{F}_d$-neighbourhood deformation retract of $B_u(A)$, as follows from the reasoning as put forth in Example~\ref{ex:counterS1} and Example~\ref{ex:S1closed}. By Theorem~\ref{thm:main:cofib}, this also shows that $\iota_A:A\hookrightarrow B_u(A)$ cannot be a $\mathscr{F}_d$-cofibration. However, to illustrate the power of $\mathscr{F}_d$-cofibrations, we show that one can utilize them directly as well (at times, this might be easier to show). Recall Remark~\ref{rem:homotopic:factor}. It follows that if $A\hookrightarrow B_u(A)$ is $\mathscr{F}_d$-cofibration, we must have
\begin{align*}
\pi_1(U) \overset{\iota_{U*}}{\longrightarrow} \pi_1(B_u(A)) = \pi_1(U) \overset{r_{*}}{\longrightarrow} \pi_1(A) \overset{\iota_{A*}}{\longrightarrow} \pi_1(B_u(A)) 
\end{align*}
but this factorization cannot hold as $\pi_1(A)=0$ (trivial fundamental group) while, by van Kampen's theorem, $\pi_1(U)\cong \pi_1(B_u(A))\cong \pi_1(\mathbb{S}^1 \vee \mathbb{S}^1)\cong \mathbb{Z}*\mathbb{Z}\neq 0$, with $\iota_{U*} \pi_1(U) \cong \pi_1(U)$, for any $U\in \mathscr{F}_d$.

The above also illustrates again that these results are independent of the precise dynamical system at hand, as is one of the benefits of the topological approach. 
\exampleEnd
}
\end{example}

Then, to return to one of our closing comments in Section~\ref{sec:correct:Wilson}, the example in Figure~\ref{fig:xaxisclosed} $(i)$ clearly satisfies the conditions of Theorem~\ref{thm:main:cofib}, as it should.

Next, we highlight a more general setting where Theorem~\ref{thm:main:cofib} applies.

\begin{example}[Trivial bundles and Lie subgroups]
\label{ex:trivial:bundle}
\upshape{
An important class of closed attractors is of the form $A=K\times \mathbb{R}^g$, where $K\subseteq \mathbb{R}^k$ is compact and $A$ lives in $X=\mathbb{R}^{k+g}$. In this case, if $d$ is translation invariant (\textit{e.g.}, $d(x,y)=d(x+z,y+z)$), $N_{\varepsilon}(A;d)=N_{\varepsilon}(K;d|_{\mathbb{R}^k})\times \mathbb{R}^g$, that is, just like $A$, its metric neighbourhood can be understood as a trivial vector bundle. However, then, if $K\hookrightarrow \mathbb{R}^k$ is a cofibration, we have that $A\hookrightarrow X$ is a $\mathscr{F}_d$-cofibration. To see this, as $(\mathbb{R}^k,K)$ is an NDR pair by Theorem~\ref{thm:cofib:retract:ndr}, let $W:=u^{-1}([0,1))\subseteq \mathbb{R}^k$ be the neighbourhood of $K$ we deform to $K$ itself. By Lemma~\ref{lem:eps:nbhd:U}, $W$ contains some metric neighbourhood $N_{r}(K;d|_{\mathbb{R}^k})$, for some $r>0$. Hence, $U:=W\times \mathbb{R}^g\subseteq X$ contains the metric neighbourhood $N_r(A;d)$ of $A$ that we can deform into $A$.

A more interesting example is as follows.
Let $\mathrm{G}$ be a (real) Lie group and identify $T\mathrm{G}$ with $\mathrm{G}\times \mathbb{R}^g$, for $g:=\mathrm{dim}(\mathrm{G})$, which we can do by the triviality of $T\mathrm{G}$ \cite[Thm. 8.37]{ref:Lee2}. Moreover, endow $X:=\mathrm{G}\times \mathbb{R}^g$ with a metric $d$ that is translation invariant in its last $g$ coordinates. Now suppose we want to find a semi-dynamical system $(X,\mathbb{R}_{\geq 0},\varphi)$ such that the tangent bundle of a compact Lie subgroup $\mathrm{G}'\subseteq \mathrm{G}$ within $T\mathrm{G}$ (and by identification, thus within $X$) is globally uniformly asymptotically stable. In particular, let $\mathrm{G}:=\mathrm{SO}(3,\mathbb{R})$ and let $\mathrm{G}'$ be isomorphic to $\mathrm{SO}(2,\mathbb{R})$. This means that $A:=T\mathrm{G}'$ is isomorphic to $\mathbb{S}^1 \times \mathbb{R}$. Note in particular that we can assume without loss of generality that $A=\mathrm{G}'\times \{0\}^n \times \mathbb{R}$ with $n=g-1$ (after a transformation of the last $g$ coordinates). Hence, as $\mathrm{G}'\times \{0\}^n$ is in particular a compact, smooth embedded submanifold of $\mathrm{G}\times \mathbb{R}^n$, $\mathrm{G}'\times \{0\}^n\hookrightarrow \mathrm{G}\times \mathbb{R}^n$ is a cofibration and we have that $A\hookrightarrow X$ is a $\mathscr{F}_d$-cofibration by the discussion from above. Therefore, $A\simeq_h B_u(A)$ by Theorem~\ref{thm:main:cofib}. However, $A\simeq _h \mathbb{S}_1\, {\not\simeq_h} \, \mathrm{SO}(3,\mathbb{R})\simeq_h X$ (\textit{e.g.}, although $\chi(\mathbb{S}^1)=0=\chi(\mathrm{SO}(3,\mathbb{R}))$, recall that $\mathrm{SO}(3,\mathbb{R})\cong \mathbb{RP}^3$ and compare homology). In conclusion, $A$ cannot be a global uniformly asymptotically stable attractor on $X$. However, we may introduce discontinuities to resolve this, that is, we may alter $\varphi$.     
\exampleEnd
}
\end{example}      
Although Example~\ref{ex:trivial:bundle} aligns with intuition from compact attractors, we emphasize again that when we work with non-compact attractors, this topological intuition might fail, \textit{e.g.}, in \cite[\S 4.2]{ref:lin2022wilson} an example is constructed where a line is globally uniformly asymptotically stable on a cylindrical space.   

\begin{remark}[Comparing conditions]
\upshape{
Conditions (i) and (ii) from Theorem~\ref{thm:main:cofib} are not equivalent, in general. However, since $(X,d)$ is a metric space, it is in particular a normal space and so $A$ being a strong $\mathscr{F}_d$-neighbourhood deformation retract of $B_u(A)$ does imply that $\iota_A:A\hookrightarrow B_u(A)$ is a $\mathscr{F}_d$-cofibration (\textit{i.e.}, normality is exploited to assert the existence of $u:X\to [0,1]$ as in Definition~\ref{def:F:NDR}). The converse is not true, in general, but clearly it does hold under the additional assumptions of Theorem~\ref{thm:main:cofib}.  

The benefit of having those two conditions should be seen in the light of Lemma~\ref{lem:cofib1} and Lemma~\ref{lem:cofib2}. When trying to show that $A\,{\simeq_h}\, B_u(A)$ holds it is arguably easier to show that the inclusion $A\hookrightarrow B_u(A)$ is a $\mathscr{F}_d$-cofibration, while when trying to show that $A\,{\not\simeq_h}\,B_u(A)$ it is arguably easier to show that $A$ is not a strong $\mathscr{F}_d$-neighbourhood deformation retract of $B_u(A)$. 
\exampleEnd
}
\end{remark}

\subsection{Relation to results for compact attractors}
First, regarding Item (i) of Theorem~\ref{thm:main:cofib}, if $A$ is compact, a strong $\mathscr{F}_d$-neighbourhood deformation retract is simply a strong neighbourhood deformation retract. This is clearly necessary and sufficient for $A$ to be a strong deformation retract of $B(A)$, given, \textit{e.g.}, \cite{ref:moulay2010topological}. Indeed, we studied cofibrations to find a meaningful topological notion to capture this \cite{ref:jongeneelECC24}.  

Then, regarding Item (ii) of Theorem~\ref{thm:main:cofib}, suppose that $A$ is compact, then, if $A\hookrightarrow B(A)$ is a $\mathscr{F}_d$-cofibration it is a $\mathscr{F}_{\tau}$-cofibration, \textit{i.e.}. a standard cofibration. 

Hence, we can conclude that Theorem~\ref{thm:main:cofib} generalizes \cite[Thm. III.6]{ref:jongeneelECC24}, which stated that when $A$ is a compact asymptotically stable attractor, $A$ is a strong deformation retract of $B(A)$ if and only if $A\hookrightarrow B(A)$ is a cofibration. That result allows us to conclude, for instance, that if $A$ is some compact, smooth embedded submanifold, then $A$ is a strong deformation retract of $B(A)$ \cite[Prop. III.10]{ref:jongeneelECC24}, see also \cite[Prop. 10]{ref:moulay2010topological}. We elaborate below, plus, we comment on insights gained from linearization and embedding techniques.   

\subsubsection{Sets with positive reach}
Indeed, a convenient sufficient condition for $A\hookrightarrow X$ to be a cofibration, is that $A$ is a smooth embedded submanifold of $X$ \cite[Prop. III.10]{ref:jongeneelECC24}. We briefly illustrate in this subsection how a subclass of closed embedded submanifolds naturally connects to $\mathscr{F}_d$-cofibrations. To that end, we need to introduce the notion of \textit{reach}, as pioneered by Federer \cite[\S 4]{ref:federer1959curvature}.
To keep the presentation simple and avoid Riemannian geometry, we assume for the moment that $A$ is a subset of $\mathbb{R}^n$. 
In particular, let $A$ be a subset of a metric space $(\mathbb{R}^n,d)$, with $d(x,y):=\|x-y\|_2$, then the \textbf{\textit{reach}} of $A$ is defined through finding the largest metric neighbourhood of $A$ such that all its elements admit a unique projection onto $A$, that is, 
\begin{align}
\label{equ:reach}
\mathrm{reach}(A) := \sup\{r\geq 0\,|\, \forall x\in D_r(A;d)\, \exists !\, a^{\star} \in A: d(x,a^{\star})=d(x,A)\}. 
\end{align}
Sets with positive reach should be understood as generalizing convex sets in that $\mathrm{reach}(A)=+\infty \iff$ $A$ is closed and convex. Note also that defining reach through~\eqref{equ:reach} enforces a uniform bound.

Now, suppose that there is some $r>0$ such that $\mathrm{reach}(A)=r>0$ and let $\Pi_A:D_r(A;d)\to A$ be the corresponding projection operator, which is continuous \cite[Thm. 4.8.4]{ref:federer1959curvature}. Then, consider the map $H:D_r(A;d)\times [0,1]\to D_r(A;d)$ defined through $H(x,s) := \Pi_A(x) + (1-s)(x-\Pi_A(x))$. As $d(H(x,s),A)\leq (1-s)d(x,A)$, it follows that $H$ is a homotopy that captures that $D_r(A;d)$ strongly deformation retracts onto $A$ and thus $A\hookrightarrow \mathbb{R}^n$ is a $\mathscr{F}_d$-cofibration, but in particular, $A$ is a strong $\mathscr{F}_d$-neighbourhood deformation retract. To parametrize a path more naturally, one may consider the following.  

\begin{remark}[Differential formulation]
\upshape{
Let $\mathrm{reach}(A)\geq r > 0$ and define $F(x):=d(x,A)$ (recall, in this subsection $d(x,y)=\|x-y\|_2$). Now consider the following dynamical system on $N_r(A;d)$:
\begin{equation}
\label{equ:diff:reach}
\dot{x} = \begin{cases}
- \displaystyle \frac{\nabla F(x)}{\| \nabla F(x) \|_2^2} \quad &\text{if }x\in N_r(A;d)\setminus A\\
0 \quad &\text{otherwise}.
\end{cases}
\end{equation}
As $\mathrm{reach}(A)\geq r$, $F(x)$ is smooth on $N_r(A;d)\setminus A$ (\textit{e.g.}, see \cite{ref:fitzpatrick1980metric}), in fact, \eqref{equ:diff:reach} leads to a forward complete semiflow on $N_r(A;d)$ as $\|\nabla F(x)\|_2=1$ on $N_r(A;d)\setminus A$ and $\Pi_A$ is well-defined. Moreover, let $\psi$ denote a solution to~\eqref{equ:diff:reach} (local flow), then $F(\psi^t(x_0) )=d(x_0,A)-t$ for $t\leq d(x_0,A)$. It follows that the homotopy $N_r(A;d)\times [0,1] \mapsto H(x,s) := \psi^{s\cdot d(x,A)}(x)$ provides for the strong deformation retraction of $N_r(A;d)$ onto $A$, \textit{i.e.}, such that $\mathrm{id}_{N_r(A;d)}\simeq_h \iota_A \circ \Pi_A$.

Instead of starting from $\mathrm{reach}(A)$, one may also study differential equations akin to~\eqref{equ:diff:reach} and infer similar results if the regularity of (local) solutions can be understood. Indeed, one ends up studying geodesics.    
}
\exampleEnd
\end{remark}

Now, when $A$ is compact (and a topological manifold), $\mathrm{reach}(A)>0$ if and only if $A$ is a $C^{1,1}$ embedded submanifold \cite{ref:federer1959curvature,ref:lytchak2004geometry,ref:lytchak2005almost}. 
In particular, $\mathrm{reach}(A)>0$ when $A$ is a compact $C^{\infty}$ (smooth) embedded submanifold.

When $A$ is not compact, one needs to control the Lipschitz moduli uniformly to enforce a strictly positive reach \cite{ref:rataj2017structure}. This is still an active topic of study, \textit{e.g.}, see \cite{ref:lieutier2024manifolds}, yet, we provide an example.

\begin{example}[Curves with positive reach]
\upshape{
A simple $C^{1,1}$ curve $A$ is said to have the \textit{{quasi-arc property}} when for any $\varepsilon>0$ there is a $\delta>0$ such that $d(x_1,x_2)<\varepsilon$ whenever $x_1,x_2,x_3\in A$, $d(x_1,x_3)<\delta$ and the elements $x_1$ and $x_3$ are not contained in the same component of $A\setminus \{x_2\}$. 

Suppose that $A\subseteq \mathbb{R}^n$ is a closed, connected $1$-dimensional set. Then, if $A$ is not compact, $\mathrm{reach}(A)>0$ if and only if $A$ is a simple $C^{1,1}$ curve, with the quasi-arc property and being homeomorphic to either $\mathbb{R}_{\geq 0}$ or $\mathbb{R}$ \cite[Cor. 8.9]{ref:rataj2017structure}. An example is $A=\{(x,\sin(x))\,|\, x\in \mathbb{R}_{\geq 0}\}$  whereas a counterexample is, for instance, $A=\{(x,\sin(1/x))\,|\, x\in \mathbb{R}_{>0}\} \cup \{(0,0)\}$. Note that $A$ as in Example~\ref{ex:counterS1} is not a closed subset of $\mathbb{R}^2$. 
\exampleEnd
}
\end{example}

\subsubsection{On a relation to being able to linearize}
Let $M$ be a smooth manifold and let $A\subseteq M$ be a compact, invariant, globally asymptotically stable attractor under some flow $\varphi:M\times \mathbb{R}\to M$. Then, related to what we study, one might ask if this flow is \textit{\textbf{linearizable}}, that is, there is some matrix $B\in \mathbb{R}^{m\times m}$ and some continuous map $F:M\to \mathbb{R}^m$ such that $F \circ \varphi^t = e^{Bt}\circ F$ for all $t\in \mathbb{R}$. Note, $F$ need not be a homeomorphism and in fact, $F$ is typically a topological embedding. This relates to what is called a Koopman linearization. It turns out that for the setting as sketched above, yet with $\varphi$ being a \textit{smooth} flow and $F$ being a \textit{smooth} embedding, $A$ must be a smooth embedded submanifold of $M$  \cite[Thm. 4]{ref:kvalheim2023linearizability}. However, this means that $A\hookrightarrow M$ must be a cofibration and thus, $A\simeq_h M$. Hence, homotopy equivalence is necessary for such a linearization to exist. 
\begin{corollary}
Let $M$ be a smooth manifold and let $A\subseteq M$ be a compact, invariant, globally asymptotically stable attractor under some smooth flow $\varphi:M\times \mathbb{R}\to M$, then, $\varphi$ is linearizable, by a smooth embedding, only if $A\simeq_h M$. 
\end{corollary}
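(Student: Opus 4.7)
The plan is to reduce the corollary to the compact case of our main theorem by way of the linearizability hypothesis, using only facts already collected in the excerpt. Let $F:M\to \mathbb{R}^m$ be the smooth embedding and $B\in\mathbb{R}^{m\times m}$ the matrix such that $F\circ \varphi^t = e^{Bt}\circ F$ for all $t\in\mathbb{R}$.

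First, I would invoke \cite[Thm.~4]{ref:kvalheim2023linearizability}, cited in the paragraph preceding the corollary: under the stated hypotheses (compact, invariant, globally asymptotically stable $A$ under a smooth flow, smoothly linearizable), the attractor $A$ is forced to be a smooth embedded submanifold of $M$. This is the key structural input the linearizability assumption delivers.

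Next, since $A$ is a \emph{compact} smooth embedded submanifold of the smooth manifold $M$, it admits a tubular neighbourhood \cite[Ch.~6]{ref:Lee2}, and hence the inclusion $\iota_A:A\hookrightarrow M$ is a cofibration (this is exactly \cite[Prop.~III.10]{ref:jongeneelECC24}, recalled in the discussion just above the corollary). Because $A$ is compact, Remark~\ref{rem:metric:topo} lets us identify $\mathscr{F}_d$ with $\mathscr{F}_\tau$ around $A$, so the inclusion is equivalently a $\mathscr{F}_d$-cofibration; similarly, Remark~\ref{rem:unif:gas} upgrades asymptotic stability to uniform asymptotic stability. Global asymptotic stability moreover gives $B_u(A)=M$, so in particular $N_\varepsilon(A;d)\subseteq B_u(A)$ for every $\varepsilon>0$.

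With these ingredients in place, Theorem~\ref{thm:main:cofib} applies to the pair $(A,M)=(A,B_u(A))$ and yields that $A$ is a strong deformation retract of $M$, so in particular $A\simeq_h M$, as desired. The only potential obstacle is purely notational bookkeeping: checking that the compact case of Theorem~\ref{thm:main:cofib} indeed collapses to the standard statement \cite[Thm.~III.6]{ref:jongeneelECC24} invoked implicitly here, but this equivalence is already spelled out in the subsection ``Relation to results for compact attractors'' above.
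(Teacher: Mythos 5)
Your argument matches the paper's: invoke \cite[Thm.~4]{ref:kvalheim2023linearizability} to force $A$ to be a compact smooth embedded submanifold, deduce that $\iota_A:A\hookrightarrow M$ is a cofibration (hence, by compactness, a $\mathscr{F}_d$-cofibration), and conclude $A\simeq_h M=B_u(A)$ via the compact specialization of Theorem~\ref{thm:main:cofib}. The only cosmetic remark is that for the sufficiency direction you need only condition (ii), so citing Lemma~\ref{lem:cofib1} (or \cite[Thm.~III.6]{ref:jongeneelECC24}) directly is slightly cleaner than invoking the full equivalence of Theorem~\ref{thm:main:cofib}.
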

Note, when $A$ is closed, but not compact, this fails to be true, as visualized by Figure~\ref{fig:closedexamples} $(ii)$, \textit{i.e.}, $X=\mathbb{R}^2\setminus \{0\}$, $A=\{(x_1,x_2)\in X\,|\,x_2=0\}$ and $B_u(A)=X$ while $A\simeq_h \mathbb{S}^0$ and $B_u(A)\simeq_h \mathbb{S}^1$, nevertheless, if we denote the corresponding flow by $\varphi$, we have that $F\circ \varphi^t = e^{Bt}\circ F$ for $F:X\hookrightarrow \mathbb{R}^2$ and $B=\mathrm{diag}(0,-1)\in \mathbb{R}^{2\times 2}$, thus a linearizing $F$ and $B$ do exist despite $A\, {\not \simeq_h}\, B_u(A)=:M$.

\section{On feedback stabilization of closed sets}
\label{sec:feedback}
We were motivated to study homotopy questions in the context of dynamical systems to understand limitations of (continuous) feedback. More specific, suppose we have a control system $\dot{x}=f(x,u)$ on a metric space $(X,d)$, where $u$ denotes the input, such that any admissible feedback $x\mapsto \mu(x)$ is such that the closed-loop system $\dot{x}=F(x):=f(x,\mu(x))$ results in a global semiflow (we are deliberately vague about the precise input structure and feedback regularity as this is not relevant for what follows). Then, if our goal is to globally uniformly asymptotically stabilize some closed set $A\subseteq X$ by means of some feedback $x\mapsto \mu(x)$, we must comply with the constraint $A\simeq_h X$ (specifically, through a strong deformation retract) in case $A\hookrightarrow X$ is a $\mathscr{F}_d$-cofibration.  

Obstructions (constraints) of this form motivate the introduction of discontinuities in our feedback, \textit{e.g.}, to globally asymptotically stabilize a point on the circle we need to ``cut'' it. This is the area of hybrid feedback control, \textit{e.g.}, see \cite{ref:sanfelice2020hybrid}. 

Theorem~\ref{thm:main:cofib} allows us to comment, with some ease and without relying on compactness, on the \textit{topological perplexity} \cite{ref:baryshnikov2023topological} of the global stabilization problem as enabled by these cuts. We briefly elaborate below, exploiting the results from above. 

At large, we aim to find a set of cuts $C\subset X\setminus A$ such that $X\setminus C$ strongly deformation retracts onto $A$ and thus $A\simeq_h X\setminus C$, \textit{e.g.}, we might jump on $C$ but flow on $X\setminus C$. For some applications it is convenient to explicitly take the boundary of $X$ into account, to that end, we let $E:=\partial X \cup C$ be the set of extended cuts and we study the structure of $E$ such that $A \simeq_h X\setminus E$. 

In what follows, we assume that a feedback $x\mapsto \mu(x)$ is found such that if we restrict to the space $X\setminus E$, we have that $A$ is a globally uniformly asymptotically stable attractor on $X\setminus E$ with $A\hookrightarrow X\setminus E$ being a $\mathscr{F}_d$-cofibration. Then, by Theorem~\ref{thm:main:cofib} we have that $A\simeq_h X \setminus E$.  

Now, consider the inclusion $X\setminus E \hookrightarrow X$ and the corresponding long exact sequence on singular cohomology 
\begin{align*}
\cdots & \to H^n(X,X\setminus E;\mathbb{Z}) \to H^n(X;\mathbb{Z})\to H^n(X\setminus E;\mathbb{Z})\to\\
& \to  H^{n+1}(X,X\setminus E;\mathbb{Z})\to \cdots 
\end{align*}
As $A\simeq_h X\setminus E$ we have $H^n (X\setminus E;\mathbb{Z})\cong H^n (A;\mathbb{Z})$ for all $n\geq 0$. What remains is the relative cohomology of $(X,X\setminus E)$. 

We comment on one setting where this is particularly clean. To that end, assume that $E=\partial X \cup C$ is closed in $X$, moreover, let $E$ be a smooth embedded $c$-dimensional submanifold of $X$, with oriented normal bundle (\textit{i.e.}, we implicitly assume that $X$ is sufficiently regular). It follows from Thom's isomorphism that $H^n(X,X\setminus E; \mathbb{Z})\cong H^{n-c}(E;\mathbb{Z})$, \textit{e.g.}, see \cite[p. 97 and Cor. 11.2]{ref:milnor1974characteristic}. 

Putting all of this together, we get the following long exact sequence of singular cohomology groups
\begin{align}
\label{equ:cohom}
\cdots \to H^{n-c}(E;\mathbb{Z}) \to H^n(X;\mathbb{Z})\to H^n(A;\mathbb{Z})\to H^{n-c+1}(E;\mathbb{Z})\to \cdots 
\end{align}

As a sanity check, suppose that $X=\mathbb{R}^2$ and $A$ is the $x$-axis as in Figure~\ref{fig:xaxisclosed} $(i)$. In this case, $E=\varnothing$ and we have the short exact sequence $0\to H^n(X;\mathbb{Z})\to H^n(A;\mathbb{Z})\to 0$, for any $n\geq 0$ and thus all remaining cohomology groups are equivalent, as should be the case.

Then, from~\eqref{equ:cohom}, we obtain directly the following inequalities: 
\begin{equation}
\label{equ:betti:ineq}
\begin{aligned}
\beta^n(X) &\leq \beta^{n-c}(E) + \beta^n(A)\\
\beta^n(A) &\leq \beta^n(X) + \beta^{n-c+1}(E)\\
\beta^{n-c}(E) &\leq \beta^{n-1}(A) + \beta^n(X),
\end{aligned}
\end{equation}
where $\beta^i(\cdot)$ is the $i$-th Betti number, that is, $\mathrm{rank}(H^i(\cdot;\mathbb{Z}))$. The inequalities~\eqref{equ:betti:ineq} should be understood as providing topological constraints on $E$, as a function of the pair $(X,A)$ and our stability demands. Indeed, one might sharpen these inequalities. 

Now, one could continue and study the mismatch between $\beta^k(A)$ and $\beta^k(X)$, that is, study the \textit{topological perplexity} of stabilizing $A$ through feedback \cite[\S 3.2]{ref:baryshnikov2023topological}. Note that, in general, the required topological properties of $E$ not only depend on the topology of $X$ and $A$, but in particular on how $A$ is embedded into $X$. This also means that if we do not specify the embedding of $A$ into $X$, we cannot, in general, extract $E$ (not even up to homotopy), from the above. Instead, we get topological constraints that must hold for all those scenarios. Hence, we typically get a collection of extended cuts.        

\begin{example}[Example~\ref{ex:trivial:bundle} continued]
\upshape{
Without loss of generality, we let $X:=\mathrm{SO}(3,\mathbb{R})$ and $A\cong \mathbb{S}^1$ (\textit{i.e.}, the trivial bundle structure allows for looking at these compact sets). Suppose that $E$ is a $2$-dimensional submanifold of $X$ (codimension $1$). Exploiting $\mathrm{SO}(3,\mathbb{R})\cong \mathbb{RP}^3$, one readily finds from~\eqref{equ:betti:ineq} that $E$ must satisfy $\beta^0(E)=1$, $\beta^1(E)=1$ and $\beta^2(E)=0$. More can be said through~\eqref{equ:cohom}, that is, utilize $H^2(X)\cong \mathbb{Z}_2$. 
\exampleEnd
}
\end{example}


\section{Conclusion and future work}
\label{sec:conclusion}

In this work we have characterized when $A$ is a strong deformation retract of $B_u(A)$ (Theorem~\ref{thm:main:cofib}) through an adaptation of cofibrations (Theorem~\ref{thm:F:cofib:retract:ndr}). Besides the search for further manifestations of $\mathscr{F}_d$-cofibrations (\textit{e.g.}, consider the \textit{uniform} tubular neighbourhood thereom in \cite[Thm. 2.33]{ref:eldering2013normally}), plus the development of numerical and discrete counterparts, we have identified several other questions and directions of interest. 
\begin{enumerate}[(i)]
\item Is there a weakest set of assumptions, in a topological sense, on the pair $(X,A)$ to have an appropriate (continuous) converse Lyapunov theory? 
\item Can results of this form be inferred from a categorical approach to Lyapunov theory, possibly allowing for a unified (regularity) study? We are inspired here by \cite{ref:ames2025categorical,ref:ames2025categoricalii}. 
\item Can results regarding homotopies of vector fields (and semiflows) that stabilize compact attractors also be extended to similar results for closed attractors? For references, see \cite{ref:reineck1991continuation,ref:kvalheim2022obstructions,ref:JongeneelSchwan2024TAC,ref:jongeneel2024hGAS,ref:kvalheim2025differential}.   
\item Can Section~\ref{sec:feedback} be understood as some form of the \textit{internal model principle} (IMP)? 
\item In general, we believe that Auslander's work on stability through filters \cite{ref:auslander1977filter} has more to offer, here we are encouraged by simple observations in Section~\ref{sec:retract:and:filters}, \textit{e.g.}, Lemma~\ref{lem:strong:def:retract:filter}.
\end{enumerate}

\pagestyle{basicstyle}
\addcontentsline{toc}{section}{Bibliography}
\subsection*{Bibliography}
\printbibliography[heading=none]

@book{hahn1967stability,
  title={Stability of Motion},
  author={Hahn, Wolfgang},
  year={1967},
  publisher={Springer Berlin, Heidelberg},
  doi={10.1007/978-3-642-50085-5}
}

@article{ref:fitzpatrick1980metric,
  title={Metric projections and the differentiability of distance functions},
  author={Fitzpatrick, Simon},
  journal={Bull. Aust. Math. Soc.},
  volume={22},
  number={2},
  pages={291--312},
  year={1980},
  publisher={Cambridge University Press},
  doi={10.1017/S0004972700006596}
}

@article{ref:yao2022topological2,
  title={Topological analysis of vector-field guided path following on manifolds},
  author={Yao, Weijia and Lin, Bohuan and Anderson, Brian DO and Cao, Ming},
  journal={IEEE T. Automat. Contr.},
  volume={68},
  number={3},
  pages={1353--1368},
  year={2022},
  publisher={IEEE},
  doi={10.1109/TAC.2022.3151236}
}

@article{ref:yao2023domain2,
  title={The domain of attraction of the desired path in vector-field-guided path following},
  author={Yao, Weijia and Lin, Bohuan and Anderson, Brian DO and Cao, Ming},
  journal={IEEE T. Automat. Contr.},
  volume={68},
  number={11},
  pages={6812--6819},
  year={2023},
  publisher={IEEE},
  doi={10.1109/TAC.2023.3239431}
}

@book{ref:eldering2013normally,
  title={Normally hyperbolic invariant manifolds: the noncompact case},
  author={Eldering, Jaap},
  year={2013},
  publisher={Atlantis Press},
  address={Paris},
  doi={10.2991/978-94-6239-003-4}
}

@book{ref:massey1991basicAT,
  title={A Basic Course in Algebraic Topology},
  author={Massey, William S},
  year={1991},
  publisher={Springer},
  doi = {10.1007/978-1-4939-9063-4},
  address={New York}
}

@article{ref:gobbino2001topological,
  title={Topological properties of attractors for dynamical systems},
  author={Gobbino, Massimo},
  journal={Topology},
  volume={40},
  number={2},
  pages={279--298},
  year={2001},
  publisher={Elsevier},
  doi={10.1016/S0040-9383(99)00061-0}
}

@article{ref:kapitanski2000shape,
  title={Shape and {M}orse theory of attractors},
  author={Kapitanski, Lev and Rodnianski, Igor},
  journal={Comm. Pure Appl. Math.},
  volume={53},
  number={2},
  pages={218--242},
  year={2000},
  doi={10.1002/(SICI)1097-0312(200002)53:2<218::AID-CPA2>3.0.CO;2-W},
  publisher={Wiley Online Library}
}

@book{ref:munkrestopology,
  title={Topology},
  author={Munkres, James},
  publisher = {Pearson},
  adress = {Harlow}, 
  year = {2014}
}

@book{ref:homotopietheorie,
  title={Homotopietheorie},
  author={tom Dieck, Tammo and Kamps, Klaus Heiner and Puppe, Dieter},
  year={1970},
  publisher={Springer-Verlag},
address={Berlin},
doi={10.1007/BFb0059721}
}

@article{ref:lin1996smoothV2,
  title={A smooth converse Lyapunov theorem for robust stability},
  author={Lin, Yuandan and Sontag, Eduardo D and Wang, Yuan},
  journal={SIAM J. Control Optim.},
  volume={34},
  number={1},
  pages={124--160},
  year={1996},
  doi={10.1137/S0363012993259981}
}

@article{ref:jongeneel2024hGAS,
      title={Asymptotic stability equals exponential stability---while you twist your eyes}, 
      author={Wouter Jongeneel},
      year={2024},
      eprint={2411.03277},
      archivePrefix={arXiv}
}

@ARTICLE{ref:JongeneelSchwan2024TAC,
  author={Jongeneel, Wouter and Schwan, Roland},
  journal={IEEE T. Automat. Contr.}, 
  title={On continuation and convex {L}yapunov functions}, 
  year={2024},
  volume={69},
  number={10},
  pages={6895-6906},
  doi={10.1109/TAC.2024.3381913}}

@article{ref:auslander1977filter,
  title={Filter stability in dynamical systems},
  author={Auslander, Joseph},
  journal={SIAM J. Math. Anal.},
  volume={8},
  number={4},
  pages={573--579},
  year={1977},
  publisher={SIAM},
  doi = {10.1137/0508045}
}

@INPROCEEDINGS{ref:jongeneelECC24,
  author={Jongeneel, Wouter},
  booktitle={Proc. ECC}, 
  title={On topological properties of compact attractors on {H}ausdorff spaces}, 
  year={2024},
  volume={},
  number={},
  pages={186--191},
  doi={10.23919/ECC64448.2024.10590887}}

@book{ref:jongeneel2023topological,
  title={Topological Obstructions to Stability and Stabilization: History, Recent Advances and Open Problems},
  author={Jongeneel, Wouter and Moulay, Emmanuel},
  year={2023},
  publisher={Springer Cham},
  doi={10.1007/978-3-031-30133-9}
}

@article{ref:kvalheim2023linearizability,
  title={Linearizability of flows by embeddings},
  author={Kvalheim, Matthew D and Arathoon, Philip},
  archivePrefix={arXiv},
  eprint={2305.18288},
  year={2023}
}

@article{ref:Coron1990,
title = {A necessary condition for feedback stabilization},
journal = {Syst. Control Lett.},
volume = {14},
number = {3},
pages = {227--232},
year = {1990},
author = {Jean-Michel Coron},
doi = {10.1016/0167-6911(90)90017-O}
}

@article{ref:massera1949liapounoff,
  title={On Liapounoff's conditions of stability},
  author={Massera, Jose Luis},
  journal={Annals of Mathematics},
  volume={50},
  number={3},
  pages={705--721},
  year={1949},
  publisher={JSTOR}
}

@article{ref:Massera1956,
 author = {Jose L. Massera},
 journal = {Ann. Math.},
 number = {1},
 pages = {182--206},
 title = {Contributions to Stability Theory},
 volume = {64},
 year = {1956},
 doi={10.2307/1969955}
}

@article{ref:wilson1967structure,
  title={The structure of the level surfaces of a {L}yapunov function},
  author={Wilson Jr, F Wesley},
  journal={J. Differ. Equ.},
  volume={3},
  number={3},
  pages={323--329},
  year={1967},
  doi={10.1016/0022-0396(67)90035-6}
}

@book{ref:bourbakigeneraltopology,
  title={General Topology: Chapters 1--4},
  author={Bourbaki, Nicolas},
  year={1989},
  publisher={Springer}, 
  address={Berlin},
  doi={10.1007/978-3-540-33982-3}
}

@INPROCEEDINGS{ref:brockett1983asymptotic,
    author = {R. W. Brockett},
    title = {Asymptotic stability and feedback stabilization},
    booktitle = {Differential Geometric Control Theory},
    year = {1983},
    pages = {181--191},
    publisher = {Birkh\"auser},
    address={Boston}
}

@article{ref:moulay2010topological,
  title={Topological properties of asymptotically stable sets},
  author={Moulay, Emmanuel and Bhat, Sanjay P},
  journal={Nonlinear Anal.-Theor.},
  volume={73},
  number={4},
  pages={1093--1097},
  year={2010},
  publisher={Elsevier},
  doi={10.1016/j.na.2010.04.043}
}

@book{ref:mayloopspaces1972,
  title={The Geometry of Iterated Loop Spaces},
  author={J. P. May},
  year={1972},
  publisher={Springer-Verlag},
  address={Berlin, Heidelberg},
  doi={10.1007/BFb0067491}
}

@book{ref:bredon1993topology,
  title={Topology and Geometry},
  author={Bredon, Glen E},
  year={1993},
  publisher={Springer},
  address={New York},
  doi={10.1007/978-1-4757-6848-0}
}

@article{ref:baryshnikov2023topological,
  title={Topological perplexity of feedback stabilization},
  author={Baryshnikov, Yu},
  journal={J. Appl. Comput. Topol.},
  volume={7},
  number={1},
  pages={75--87},
  year={2023},
  publisher={Springer},
  doi={10.1007/s41468-022-00098-2}
}

@article{ref:kvalheim2023relationships,
  title={Relationships Between Necessary Conditions for Feedback Stabilizability},
  author={Kvalheim, Matthew D},
    archivePrefix={arXiv},
  eprint={2312.16752},
  year={2023}
}

@book{Hatcher,
  title={Algebraic Topology},
  author={Hatcher, Allen},
  year={2002},
  publisher={Cambridge University Press},
  address={Cambridge}
}

@article{ref:garay1991strong,
  title={Strong cellularity and global asymptotic stability},
  author={Garay, Barnabas},
  journal={Fund. Math.},
  volume={2},
  number={138},
  pages={147--154},
  year={1991}
}

@book{ref:krasnosel1984geometrical,
  title={Geometrical Methods of Nonlinear Analysis},
  author={Krasnosel'ski\u{\i}, Alexander and Zabre\u{\i}ko, Petr Petrovich},
  year={1984},
  publisher={Springer},
  address={Berlin}
}

@book{ref:conley1978isolated,
  title={Isolated Invariant Sets and the {M}orse Index},
  author={Conley, Charles C},
  year={1978},
  publisher={American Mathematical Society},
  address={Providence},
  doi={10.1090/cbms/038}
}

@book{ref:guillemin2010differential,
  title={Differential Topology},
  author={Guillemin, Victor and Pollack, Alan},
  year={1974},
  publisher={American Mathematical Society},
  address={Providence}
}

@article{ref:ames2025categorical,
  title={Categorical {L}yapunov Theory {I}: {S}tability of Flows},
  author={Ames, Aaron D and Moeller, Joe and Tabuada, Paulo},
  archivePrefix={arXiv},
  eprint={2502.15276},
  year={2025}
}

@article{ref:ames2025categoricalii,
  title={Categorical {L}yapunov Theory {II}: {S}tability of Systems},
  author={Ames, Aaron D and Mattenet, S{\'e}bastien and Moeller, Joe},
    archivePrefix={arXiv},
  eprint={2505.22968},
  year={2025}
}

@article{ref:hastings1979higher,
  title={A higher-dimensional {P}oincar{\'e}-{B}endixson theorem},
  author={Hastings, HM},
  journal={Glas. Mat.},
  volume={14},
  number={34},
  pages={263--268},
  year={1979}
}

@book{ref:borsuk1967theory,
  title={Theory of Retracts},
  author={Borsuk, Karol},
  year={1967},
  publisher={Pa{\'n}stwowe Wydawn. Naukowe},
  address={Warszawa}
}

@book{ref:sanfelice2020hybrid,
  title={Hybrid Feedback Control},
  author={Sanfelice, Ricardo G},
  year={2020},
  publisher={Princeton University Press},
  address={Princeton},
  doi={10.2307/j.ctv131btfx}
}

@book{ref:milnor1974characteristic,
  title={Characteristic Classes},
  author={Milnor, John Willard and Stasheff, James D},
  year={1974},
  publisher={Princeton University Press},
  address={Princeton},
  doi={10.1515/9781400881826}
}

@book{ref:zubov1964methods,
  title={Methods of {A}.{M}. {L}yapunov and Their Application},
  author={Zubov, Vladimir Ivanovich},
  publisher={P. Noordhoff},
  address={Groningen},
  year={1964}
}

@article{ref:kellett2004weak,
  title={Weak converse {L}yapunov theorems and control-{L}yapunov functions},
  author={Kellett, Christopher M and Teel, Andrew R},
  journal={SIAM J. Control. Optim.},
  volume={42},
  number={6},
  pages={1934--1959},
  year={2004},
  publisher={SIAM},
  doi={10.1137/S0363012901398186}
}

@article{ref:roxin1965stability,
  title={Stability in general control systems},
  author={Roxin, Emilio},
  journal={J. Differ. Equ.},
  volume={1},
  number={2},
  pages={115--150},
  year={1965},
  publisher={Academic Press},
  doi={10.1016/0022-0396(65)90015-X}
}

@book{ref:berge1963topological,
  title={Topological Spaces},
  author={Berge, Claude},
  year={1963},
  publisher={Oliver \& Boyd}
}

@book{may1999concise,
  title={A Concise Course in Algebraic Topology},
  author={May, J Peter},
  year={1999},
  publisher={University of Chicago Press},
address={Chicago}
}

@book{ref:bhatia1970stability,
  title={Stability Theory of Dynamical Systems},
  author={Bhatia, Nam Parshad and Szeg{\"o}, Giorgio P},
  year={1970},
  publisher={Springer},
  address={Berlin, Heidelberg}
}

@article{ref:gunther1993every,
  title={Every attractor of a flow on a manifold has the shape of a finite polyhedron},
  author={G{\"u}nther, Bernd and Segal, Jack},
  journal={Proc. Am. Math. Soc. },
  volume={119},
  number={1},
  pages={321--329},
  year={1993},
  doi={10.2307/2159860}
}

@article{ref:Bhatiasaddle1970,
title = {Attraction and nonsaddle sets in dynamical systems},
journal = {J. Differ. Equ.},
volume = {8},
number = {2},
pages = {229-249},
year = {1970},
doi = {10.1016/0022-0396(70)90003-3},
author = {Nam P Bhatia}
}

@book{ref:lefschetzlasalle1961,
  title={Stability by Liapunov's Direct Method with Applications},
  author={La Salle, Joseph and Lefschetz, Solomon},
  year={1961},
  publisher={Academic Press},
  address={New York}
}

@article{ref:dydak2012ideas,
  title={Ideas and influence of {K}arol {B}orsuk},
  author={Dydak, Jerzy},
  journal={Wiadom. Mat.},
  volume={48},
  number={2},
  pages={81--95},
  year={2012}
}

@article{ref:mansouri2010topological,
  title={Topological obstructions to submanifold stabilization},
  author={Mansouri, Abdol-Reza},
  journal={IEEE T. Automat. Contr.},
  volume={55},
  number={7},
  pages={1701--1703},
  year={2010},
  doi={10.1109/TAC.2010.2046922}
}

@article{ref:auslanderbhatiasiebert1967asymptotic,
  title={Attractors in dynamical systems},
  author={J. Auslander and N. Bhatia and P. Seibert},
  journal={Bol. Soc. Mat. Mex.},
  volume={9},
  pages={55--66},
  year={1964}
}

@article{ref:hurley2001weak,
  title={Weak attractors from {L}yapunov functions},
  author={Hurley, Mike},
  journal={Topol. Appl.},
  volume={109},
  number={2},
  pages={201--210},
  year={2001},
  doi={10.1016/S0166-8641(99)00158-3},
  publisher={Elsevier}
}

@article{ref:hajek1972ordinary,
  title={Ordinary and asymptotic stability of noncompact sets},
  author={H{\'a}jek, Otomar},
  journal={J. Differ. Equ.},
  volume={11},
  number={1},
  pages={49--65},
  year={1972},
  doi={10.1016/0022-0396(72)90080-0},
  publisher={Elsevier}
}

@book{ref:bhatiahajek2006local,
  title={Local semi-dynamical systems},
  author={Bhatia, N P and H{\'a}jek, O},
  year={1969},
  publisher={Springer-Verlag Berlin, Heidelberg},
  doi={10.1007/BFb0079585}
}

@book{ref:hu1965,
  title={Theory of Retracts},
  author={S. T. Hu},
  year={1965},
  publisher={Wayne State University Press},
  address={Detroit}
}

@article{ref:lin2022wilson,
  title={On {W}ilson’s theorem about domains of attraction and tubular neighborhoods},
  author={Lin, Bohuan and Yao, Weijia and Cao, Ming},
  journal={Syst. Control Lett.},
  volume={167},
  pages={105322},
  year={2022},
  publisher={Elsevier},
  doi = {10.1016/j.sysconle.2022.105322}
}

@article{ref:mansouri2007local,
  title={Local asymptotic feedback stabilization to a submanifold: Topological conditions},
  author={Mansouri, Abdol-Reza},
  journal={Syst. Control Lett.},
  volume={56},
  number={7-8},
  pages={525--528},
  year={2007},
  doi={10.1016/j.sysconle.2007.03.001}
}

@article{ref:bernuau2013retraction,
  title={Retraction obstruction to time-varying stabilization},
  author={Bernuau, Emmanuel and Perruquetti, Wilfrid and Moulay, Emmanuel},
  journal={Automatica},
  volume={49},
  number={6},
  pages={1941--1943},
  year={2013},
  doi={10.1016/j.automatica.2013.03.012}
}

@article{ref:albertini1999continuous,
  title={Continuous control-{L}yapunov functions for asymptotically controllable time-varying systems},
  author={Albertini, Francesca and Sontag, Eduardo D},
  journal={Int. J. Control},
  volume={72},
  number={18},
  pages={1630--1641},
  year={1999},
  publisher={Taylor \& Francis},
  doi={10.1080/002071799219977}
}

@article{ref:lytchak2004geometry,
  title={On the geometry of subsets of positive reach},
  author={Lytchak, Alexander},
  journal={Manuscripta Math.},
  volume={115},
  number={2},
  pages={199--205},
  year={2004},
  publisher={Springer},
  doi={10.1007/s00229-004-0491-8}
}

@article{ref:lytchak2005almost,
  title={Almost convex subsets},
  author={Lytchak, Alexander},
  journal={Geom. Dedicata},
  volume={115},
  number={1},
  pages={201--218},
  year={2005},
doi = {10.1007/s10711-005-5994-2}
}

@article{ref:lieutier2024manifolds,
  title={Manifolds of positive reach, differentiability, tangent variation, and attaining the reach},
  author={Lieutier, Andr{\'e} and Wintraecken, Mathijs},
    year={2024},
      eprint={2412.04906},
      archivePrefix={arXiv}
}

@article{ref:rataj2017structure,
  title={On the structure of sets with positive reach},
  author={Rataj, Jan and Zaj{\'\i}{\v{c}}ek, Lud{\v{e}}k},
  journal={Math. Nachr.},
  volume={290},
  number={11-12},
  pages={1806--1829},
  year={2017},
  publisher={Wiley Online Library},
  doi={10.1002/mana.201600237}
}

@article{ref:federer1959curvature,
  title={Curvature measures},
  author={Federer, Herbert},
  journal={Trans. Am. Math. Soc.},
  volume={93},
  number={3},
  pages={418--491},
  year={1959},
  doi={10.2307/1993504}
}

@article{ref:giraldo2009singular,
  title={Singular continuations of attractors},
  author={Giraldo, Antonio and Sanjurjo, J. M. R.},
  journal={SIAM J. Appl. Dyn.},
  volume={8},
  number={2},
  pages={554--575},
  year={2009},
  publisher={SIAM},
  doi={10.1137/080737356}
}

@article{ref:kvalheim2022necessary,
  title={Necessary conditions for feedback stabilization and safety},
  author={Kvalheim, Matthew D and Koditschek, Daniel E},
  journal={J. Geom. Mech.},
  volume={14},
  number={4},
  pages={659--693},
  year={2022},
  doi={10.3934/jgm.2022013}
}

@article{ref:giraldo2001some,
  title={Some duality properties of non-saddle sets},
  author={Giraldo, A. and Mor{\'o}n, Manuel A. and Ruiz del Portal, F. R. and Sanjurjo, J. M. R.},
  journal={Topol. Appl.},
  volume={113},
  number={1-3},
  pages={51--59},
  year={2001},
  publisher={Elsevier},
  doi={10.1016/S0166-8641(00)00017-1}
}

@inproceedings{ref:nguyen2019connected,
  title={On connected sublevel sets in deep learning},
  author={Nguyen, Quynh},
  booktitle={Proc. ICML},
  pages={4790--4799},
  year={2019},
  organization={PMLR}
}

@article{ref:bhatia1967asymptotic,
  title={On asymptotic stability in dynamical systems},
  author={Bhatia, Nam P},
  journal={Math. Syst. Theory},
  volume={1},
  number={2},
  pages={113--127},
  year={1967},
  publisher={Springer}
}

@book{ref:nemytskii,
  title={Qualitative Theory of Differential Equations},
  author={V.V. Nemytskii and V.V. Stepanov},
  year={1960},
  publisher={Princeton University Press},
address={Princeton}
}

@article{ref:reineck1991continuation,
  title={Continuation to gradient flows},
  author={Reineck, James F},
  journal={Duke Math. J.},
  volume={64},
  number={2},
  pages={261--269},
  year={1991},
  publisher={Duke University Press},
  doi={10.1215/S0012-7094-91-06413-6}
}

@article{ref:kvalheim2025differential,
  title={Differential topology of the spaces of asymptotically stable vector fields and Lyapunov functions},
  author={Kvalheim, Matthew D},
  year={2025},
      eprint={2503.10828},
      archivePrefix={arXiv}
}

@article{ref:kvalheim2022obstructions,
author = {Kvalheim, Matthew D.},
title = {Obstructions to Asymptotic Stabilization},
journal = {SIAM J. Control Optim.},
volume = {61},
number = {2},
pages = {536-542},
year = {2023},
doi = {10.1137/22M1500113}
}

@article{ref:zabczyk1989,
  title={Some comments on stabilizability},
  author={Zabczyk, Jerzy},
  journal={Appl. Math. Opt.},
  volume={19},
  number={1},
  pages={1--9},
  year={1989},
  doi={10.1007/BF01448189}
}

@book{ref:Lee2,
  title={Introduction to Smooth Manifolds},
  author={Lee, John M.},
  year={2012},
  publisher={Springer},
  address={New York},
  doi = {10.1007/978-1-4419-9982-5}
}

@article{BhatBernstein,
    author  = "Sanjay P. Bhat and Dennis S. Bernstein",
    title   = "A topological obstruction to continuous global stabilization of rotational motion and the unwinding phenomenon",
    year    = "2000",
    journal = "Syst. Control Lett.",
    volume  = "39",
    pages   = "63--70",
    doi = "10.1016/S0167-6911(99)00090-0"
}

@book{ref:sontag2013mathematical,
  title={Mathematical Control Theory: Deterministic Finite Dimensional Systems},
  author={Sontag, Eduardo D},
  year={1998},
  publisher={Springer New York},
  doi={10.1007/978-1-4612-0577-7}
}

@book{ref:akin1993general,
  title={The General Topology of Dynamical Systems},
  author={Akin, Ethan},
  year={1993},
  publisher={American Mathematical Society},
  address={Providence},
  doi={10.1090/gsm/001}
}


\end{document}